\newtheorem{remark}{Remark}
\newtheorem{notation}[theorem]{Notation}
\newcommand{\ind}{\,\mbox{d}}
\definecolor{rot1}{rgb}{1.000,0.000,0.000}
\definecolor{rot2}{rgb}{0.000,0.000,1.000}
\newcommand{\R}{{\mathbb R}}
\newcommand{\C}{{\mathbb C}}
\newcommand{\be}{\begin{eqnarray}}
\newcommand{\ben}{\begin{eqnarray*}}
\newcommand{\en}{\end{eqnarray}}
\newcommand{\enn}{\end{eqnarray*}}
\newcommand{\pa}{\partial}
\begin{document}

\title{Modified sampling method with near field measurements}
\author{Xiaodong Liu\footnotemark[1], Shixu Meng\footnotemark[2], and Bo Zhang\footnotemark[3]}
\renewcommand{\thefootnote}{\fnsymbol{footnote}}
\footnotetext[1]{Academy of Mathematics and Systems Science, Chinese Academy of Sciences, Beijing 100190, China. {\tt xdliu@amt.ac.cn}}
\footnotetext[2]{Academy of Mathematics and Systems Science, Chinese Academy of Sciences,
Beijing 100190, China.  {\tt shixumeng@amss.ac.cn}}
\footnotetext[3]{Academy of Mathematics and Systems Science, Chinese Academy of Sciences,
Beijing 100190, China and School of Mathematical Sciences, University of Chinese Academy of Sciences,
Beijing 100049, China.  {\tt b.zhang@amt.ac.cn}}
\maketitle

\begin{abstract}
This paper investigates the inverse scattering problems using sampling methods with near field measurements. The near field measurements appear in two classical inverse scattering problems: the inverse scattering for obstacles and the interior inverse scattering for cavities. We propose modified sampling methods to treat these two classical problems using near field measurements without making any asymptotic assumptions on the distance between the measurement surface and the scatterers. We provide theoretical justifications based on the factorization of the near field operator in both symmetric factorization case and non-symmetric factorization case. Furthermore, we introduce a data completion algorithm  which allows us to apply the modified  sampling methods to treat the limited-aperture inverse scattering problems. Finally numerical examples are provided to illustrate the modified  sampling methods with both full- and limited- aperture near field measurements.
\end{abstract}

\begin{keywords}
 inverse scattering,  sampling method, near field measurements, limited-aperture
 \end{keywords}
\section{Introduction}  \label{Introduction}
Inverse scattering  plays an important role in non-destructive testing, medical imaging, geophysical exploration and numerous problems associated with target identification. In the last thirty years, sampling methods for shape reconstruction in inverse scattering problems have attracted a lot of interest.
Classical examples include the linear sampling method  \cite{ColtonKirsch}, the singular sources method  \cite{potthast2010study}
and the factorization method  \cite{Kirsch98}.
The basic idea is to design an indicator which is large inside the underlying scatterer and relatively small outside.
We refer to the monographs  \cite{CaCo,CK,kirsch2008factorization} for a comprehensive introduction.
Recently, other types of sampling methods are proposed for the inverse scattering problems,
e.g., orthogonality sampling   \cite{potthast2010study,griesmaier2011multi,harris2020orthogonality}, direct sampling method  \cite{ItoJinZou,LiuIP17}, single-shot method   \cite{LiLiuZou},
reverse time migration \cite{CCH2013}.
These sampling methods inherit many advantages of the classical ones, e.g., they are independent of any a priori information on the geometry and physical properties of the unknown objects. The main feature of these  sampling methods is that only inner product of the measurements with some suitably chosen functions is involved in the imaging function and thus  these  sampling methods are robust to noises.
In all of the sampling methods, the measurements may be divided into two types: far-field measurements and near field measurements. Generally speaking, the analysis is more rich in the far-field case compared to the near field case. This paper is devoted to  sampling methods in the near field case with both full- and partial- aperture data.

Compared to orthogonal/direct sampling methods using  far-field measurements \cite{potthast2010study,griesmaier2011multi,LiuIP17,harris2020orthogonality}, the study on near field orthogonal/direct sampling method is relatively limited. The near field measurements appear in two classical inverse scattering problems:  the  inverse scattering for obstacles (see for instance the monograph \cite{CK}) and the interior inverse scattering for cavity \cite{QCo2,QCa,L,CaCoMe,MHC}. Is it possible to treat these two classical inverse scattering problems using one unified framework? The answer is yes and this is one of our main results in this paper.  We survey literatures on both  problems as follows. Concerning the inverse scattering for obstacles, there have been considerable works on the reverse time migration \cite{CCH2013} and  direct sampling methods \cite{ItoJinZou} using  near field measurements. In these works, numerical algorithms illustrated the performance of the imaging method and the analysis was done when the measurement surface is not close to the scatterer. Our work differs from these existing literatures because we propose modified sampling methods to treat these two classical problems using near field measurements without making any asymptotic assumptions on the distance between the measurement surface and the scatterers.
%
One difficulty in this treatment is the non-symmetric factorization \cite{Hu2014,MHC} of the near field operator and the other difficulty is the necessity to consider the ``distance'' related fundamental solution (as contrary to considering the plane wave in the far-field case). We show in this paper how to design a sampling method to overcome these difficulties. Furthermore, the sampling method is completely theoretically justified in the sense that the proposed imaging function has both an upper and lower bound which peak when the sampling point is at the obstacle boundary. On the other aspect, there seems to be no attempts to design orthogonal/direct imaging methods in the interior inverse scattering for cavity where the measurements are in the near field naturally. In the cavity setting, it is not possible to asymptotically consider the near field measurements  whereby the existing methods \cite{CCH2013,ItoJinZou} do not apply.
Fortunately, our proposed sampling method is in a unified framework, which allows us to treat the interior inverse scattering for cavity by a similar sampling method using near field measurements. Once again, this modified  sampling method provides both theoretical justifications and efficient numerical algorithms. This seems a first attempt on orthogonal/direct sampling methods in the interior inverse scattering for cavity.

In many cases of practical interest, it is difficult or even impossible to obtain the {\em full-aperture} measurements, this motivates us to consider our modified sampling methods with {\em limited-aperture} measurements.  Reconstruction algorithms have been developed using the {limited-aperture} data directly
\cite{BaoLiu,ColtonMonk06,IkehataNiemiSiltanen,kirsch2008factorization,LuXuXu2012AA,MagerBleistein1978, Zinn1989}. Alternatively,  \cite{LiuSun19, LuXuXu2012AA} first recover the full-aperture data and then solve the inverse problems. Recently two novel data completion algorithms were proposed \cite{DLMZ2021} for the inverse scattering problems in the far-field case, see also \cite{BORCEA2019556} for the waveguide case. We aim to apply a similar data completion algorithm in the near field case to recover the full-aperture data  and then apply our modified sampling methods.

The paper is further organized as follows. In Section \ref{section model}, we give the mathematical formulation of the two classical inverse scattering problems with near field measurements:   the inverse scattering for obstacles and the interior inverse scattering for cavities. We also provide a preliminary result on the coercivity estimate of the single-layer operator. The modified  sampling methods using near field measurements are investigated  in Section \ref{section obstacle} for obstacles  and in Section \ref{section cavity} for cavities, respectively. Such modified sampling methods give both theoretical justifications and numerical algorithms, no matter the associated near field operators have  symmetric factorizations or not. The key idea is to design properly chosen functions to overcome the above two difficulties associated with non-symmetric factorization and ``distance'' related fundamental solution.   Section \ref{section limited-aperture} is devoted to a data completion algorithm which recovers the full-aperture data and allows us to apply the modified sampling methods in the previous sections. Finally, numerical examples are provided in Section \ref{section numerical examples} to illustrate the modified sampling methods with both full- and limited- aperture near field measurements.

\section{Mathematical model and setup} \label{section model}
\subsection{Mathematical models}
We consider two classical inverse scattering problems with near field measurements:  the inverse scattering for obstacles and the interior inverse scattering for cavities. Throughout the paper we focus on the two dimensional case. The three dimensional case is similar yet to be done.  In both problems, let $k>0$ be the  wave number. A point source $\phi(\cdot;y)$ at $y$ is the fundamental solution with the following explicit expression
\begin{equation} \label{def fundamental solution}
\phi(x;y)=\frac{i}{4} H^1_0(k|x-y|),\quad x\not=y,
\end{equation}
where $H^1_0$ is the Hankel function of the first kind of order zero \cite{CK}.

\vspace{.5\baselineskip}

\noindent \textbf{Inverse scattering for obstacles}: Let $\Omega \subset \mathbb{R}^2$ be an open and bounded domain with Lipschitz boundary $\partial \Omega$ such that $\mathbb{R}^2 \backslash \overline{\Omega}$  is connected. The domain $\Omega$ is referred as the obstacles. The scattering for the obstacles due to a point source $\phi(\cdot;y)$ is to find scattered wave field $u^s(\cdot;y)$ such that
\begin{eqnarray}
\Delta_x u^s(\cdot;y) + k^2 u^s(\cdot;y) = 0 \quad &\mbox{in}& \quad \mathbb{R}^2\backslash \overline{\Omega}, \quad  \label{obstacle us eqn1}\\
u^s(\cdot;y) = -\phi(\cdot;y) \quad &\mbox{on}& \quad \partial \Omega,   \label{obstacle us eqn2} \\
\lim_{r:=|x|\to \infty} \sqrt{r}  \left( \frac{\partial u^s(\cdot;y)}{\partial r} -ik u^s(\cdot;y)\right) =0.
\end{eqnarray}
This scattering problem is well-posed, see for instance \cite{CK}.
Here we have assumed a sound-soft obstacle. 

Let $\partial B:=\{x: |x|=r_o, \,r_o>0\}$ be the measurement surface and $B:=\{x: |x|<r_o, \,r_o>0\}$ includes $\Omega$ as its interior. The inverse problem is to determine $\partial \Omega$ from the following \textit{near field} measurements:
\begin{equation} \label{obstacle nfm}
\{ u^s(x;y): x, y\in \partial B\}.
\end{equation}
The inverse problem has a unique solution \cite{CK}.

\vspace{.5\baselineskip}

    \begin{figure}[ht!]
    \centering
\includegraphics[width=0.59\linewidth]{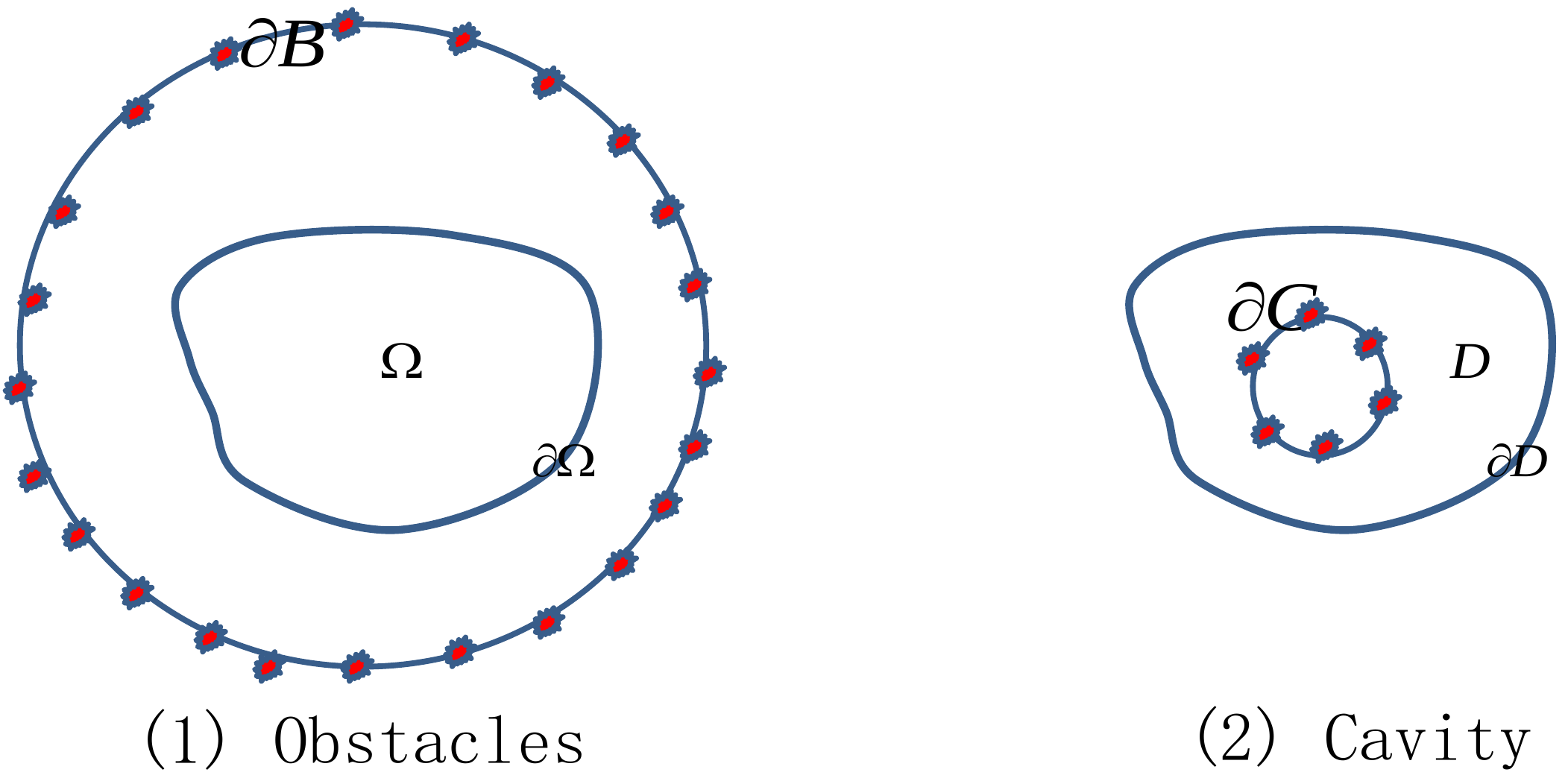}
     \caption{
     \linespread{1}
Example of the inverse scattering for obstacles and the interior inverse scattering for a cavity.
     } \label{fig geometry}
    \end{figure}

\noindent \textbf{Interior inverse scattering for cavities}:  Let $D\subset \mathbb{R}^2$ be an open, connected and bounded domain with Lipschitz boundary $\partial D$. The domain $D$ is referred as the cavity. The scattering for the cavity due to a point source $\phi(\cdot;y)$ is to find scattered wave field $u^s(\cdot;y)$ such that
\begin{eqnarray}
\Delta_x u^s(\cdot;y) + k^2 u^s(\cdot;y) = 0 \quad &\mbox{in}& \quad D, \quad  \label{cavity us eqn1}\\
u^s(\cdot;y) = -\phi(\cdot;y) \quad &\mbox{on}& \quad \partial D   \label{cavity us eqn2}.
\end{eqnarray}
If $k^2$ is not an eigenvalue of $-\Delta$ in the domain $D$, this scattering problem is well-posed, see for instance \cite{L,QCo2}. Again here we have assumed a sound-soft cavity.

Let $\partial C:=\{x: |x|=r_i, \,r_i>0\}$ be the measurement surface and $C:=\{x: |x|<r_i, \,r_i>0\}$ is in the interior of $D$. The inverse problem is to determine $\partial D$ from the following \textit{near field} measurements:
\begin{equation} \label{cavity nfm}
\{ u^s(x;y): x,y \in \partial C\}.
\end{equation}


In the interior inverse scattering for cavity problem, we always make the assumptions that $k^2$ is not an eigenvalue of $-\Delta$ in the domain $D$ and $C$. Note that  the assumption that $k^2$ is not an eigenvalue of $-\Delta$ in $C$ is not a restriction, since one can always choose $C$ such that this assumption holds. With these assumptions, the inverse problem has a unique solution \cite{L,QCo2}.

%
%
\subsection{Estimate of Single-layer operator}
%
Let $\Gamma$ be either $\partial D$ or $\partial \Omega$ and assume that $k^2$ is not an eigenvalue of $-\Delta$ in the domain  $\Omega$ and $D$, respectively. We define $ \mathrm{T}_\Gamma: H^{1/2}(\Gamma) \to H^{-1/2}(\Gamma)$ by
\begin{equation} \label{def T 1}
\mathrm{T}_\Gamma g=h,
\end{equation}
where, for any $g \in H^{1/2}(\Gamma)$, $h \in H^{-1/2}(\Gamma)$ is the unique solution to
\begin{equation} \label{def T 2}
g(x) = \int_{\Gamma} \overline{ \phi(x;y)} h(y) \ind s_y, \quad  x \in   \Gamma.
\end{equation}
We note that the above integral equation is uniquely solvable due to that $k^2$ is not an eigenvalue of $-\Delta$ in the domain $D$ or $\Omega$, see for instance \cite{McLean2000,CK,kirsch2008factorization}.

The following coercivity estimate plays an important role in the analysis of our imaging function.
\begin{lemma} \label{mo coercivity}
Let $\Gamma$ be either $\partial D$ or $\partial \Omega$.
For any $g \in H^{1/2}(\Gamma)$,
\begin{equation}
\big| \langle  \mathrm{T}_\Gamma g,g \rangle_\Gamma \big| \ge c_0 \|g\|^2_{H^{1/2}(\Gamma)}.
\end{equation}
Here $\langle \cdot, \cdot \rangle_\Gamma$ denotes the duality paring between $H^{-1/2}(\Gamma)$ and $H^{1/2}(\Gamma)$.
\end{lemma}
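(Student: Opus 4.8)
The plan is to recognize $\mathrm{T}_\Gamma$ as the inverse of a single-layer boundary operator and to transfer a coercivity estimate from that operator. Define $\mathrm{S}_\Gamma : H^{-1/2}(\Gamma) \to H^{1/2}(\Gamma)$ by $(\mathrm{S}_\Gamma h)(x) = \int_\Gamma \overline{\phi(x;y)}\, h(y)\ind s_y$; then by \eqref{def T 1}--\eqref{def T 2} we have $\mathrm{S}_\Gamma \mathrm{T}_\Gamma = \mathrm{I}$, and under the standing assumption that $k^2$ is not a Dirichlet eigenvalue of $-\Delta$, the operator $\mathrm{S}_\Gamma$ is a bounded bijection with bounded inverse $\mathrm{T}_\Gamma$. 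Writing $h = \mathrm{T}_\Gamma g$, so that $g = \mathrm{S}_\Gamma h$, the quantity to be bounded becomes $\langle \mathrm{T}_\Gamma g, g \rangle_\Gamma = \langle h, \mathrm{S}_\Gamma h \rangle_\Gamma$. Hence it suffices to prove the coercivity of $\mathrm{S}_\Gamma$ itself, namely $|\langle h, \mathrm{S}_\Gamma h \rangle_\Gamma| \ge \tilde c\, \|h\|^2_{H^{-1/2}(\Gamma)}$ for all $h$; combined with the bound $\|g\|_{H^{1/2}(\Gamma)} = \|\mathrm{S}_\Gamma h\|_{H^{1/2}(\Gamma)} \le C \|h\|_{H^{-1/2}(\Gamma)}$ this yields the claim with $c_0 = \tilde c / C^2$.

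To prove the coercivity of $\mathrm{S}_\Gamma$, I would split off its principal part. Since $\overline{\phi(x;y)}$ and the fundamental solution of $-\Delta + \kappa^2$ (for a fixed $\kappa>0$) share the same leading logarithmic singularity $-\tfrac{1}{2\pi}\log|x-y|$, one writes $\mathrm{S}_\Gamma = A + K$, where $A$ is the single-layer operator associated with this modified Helmholtz equation and $K = \mathrm{S}_\Gamma - A$ has a more regular kernel. The operator $A$ is self-adjoint and coercive, $\langle h, A h \rangle_\Gamma \ge c\, \|h\|^2_{H^{-1/2}(\Gamma)}$, since its quadratic form equals the positive energy $\int_{\mathbb{R}^2}(|\nabla w|^2 + \kappa^2 |w|^2)$ of the associated potential, while $K$ is compact from $H^{-1/2}(\Gamma)$ into $H^{1/2}(\Gamma)$ by the gain of regularity together with a compact Sobolev embedding. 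This already gives a G\aa rding inequality. The second ingredient is the sign of the imaginary part: if $w(h)$ denotes the single-layer potential of $h$ with kernel $\overline{\phi(\cdot;y)}$, then $w(h)$ satisfies the Helmholtz equation off $\Gamma$ and $\operatorname{Im}\langle h, \mathrm{S}_\Gamma h \rangle_\Gamma$ is a fixed signed multiple of $\|w_\infty(h)\|^2_{L^2(\mathbb{S}^1)}$, the squared $L^2$-norm of the far-field pattern of $w(h)$.

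The final and main step is to upgrade the G\aa rding inequality to the two-sided bound by a compactness argument, which is where the injectivity encoded in the eigenvalue assumption is essential. Arguing by contradiction, suppose there is a sequence $h_n$ with $\|h_n\|_{H^{-1/2}(\Gamma)} = 1$ and $\langle h_n, \mathrm{S}_\Gamma h_n \rangle_\Gamma \to 0$, and pass to a weakly convergent subsequence $h_n \rightharpoonup h_0$. On one hand, the G\aa rding inequality $c\,\|h_n\|^2 \le \langle h_n, A h_n \rangle_\Gamma = \operatorname{Re}\langle h_n, \mathrm{S}_\Gamma h_n \rangle_\Gamma - \operatorname{Re}\langle h_n, K h_n \rangle_\Gamma$, together with the strong convergence $K h_n \to K h_0$, forces $\operatorname{Re}\langle h_0, K h_0 \rangle_\Gamma \le -c < 0$, so that $h_0 \neq 0$. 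On the other hand, expanding $\langle h_n - h_0, \mathrm{S}_\Gamma(h_n - h_0)\rangle_\Gamma$ and using weak convergence shows that it tends to $-\langle h_0, \mathrm{S}_\Gamma h_0 \rangle_\Gamma$; taking imaginary parts and recalling that each term carries a fixed sign pinches the limit to $\operatorname{Im}\langle h_0, \mathrm{S}_\Gamma h_0 \rangle_\Gamma = 0$, hence $w_\infty(h_0) = 0$. By Rellich's lemma and unique continuation, $w(h_0)$ vanishes in the unbounded component of $\mathbb{R}^2 \setminus \Gamma$; the continuity of the single-layer potential across $\Gamma$ then yields zero Dirichlet data for the interior problem, and since $k^2$ is not a Dirichlet eigenvalue of $-\Delta$ in $\Omega$ (resp. $D$) we conclude $w(h_0) \equiv 0$, so that the jump of the normal derivative gives $h_0 = 0$, contradicting $h_0 \neq 0$. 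I expect this last step---extracting $w_\infty(h_0)=0$ from the vanishing numerical range and then invoking uniqueness for the interior Dirichlet problem---to be the delicate part, both because it relies on the sign structure of the imaginary part and because it is the only place where the spectral hypothesis enters.
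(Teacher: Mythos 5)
Your proposal is correct, and it shares the structural skeleton of the paper's proof---a G\aa rding-type decomposition, a strict sign condition on the imaginary part of the quadratic form, and a compactness argument that upgrades these to the two-sided bound---but you execute each ingredient by a genuinely different route. The paper works with $\mathrm{T}_\Gamma$ directly: it cites \cite[Lemma 5.38]{CK} for the fact that $\mathrm{T}_\Gamma$ is a strictly coercive self-adjoint operator plus a compact one, proves $-\Im\langle \mathrm{T}_\Gamma g, g\rangle_\Gamma<0$ for $g\neq 0$ by a Green's-identity computation on the single-layer potential $w$ of $h=\mathrm{T}_\Gamma g$ combined with \cite[Theorem 2.13]{CK}, and then concludes by the abstract lemma \cite[Lemma 7.28]{CaCo} (or \cite[Lemma 5.37]{CK}). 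You instead transfer everything to $\mathrm{S}_\Gamma=\mathrm{T}_\Gamma^{-1}$, which is legitimate because by \eqref{def T 1}--\eqref{def T 2} the numerical ranges coincide, $\langle \mathrm{T}_\Gamma g,g\rangle_\Gamma=\langle h,\mathrm{S}_\Gamma h\rangle_\Gamma$ with $g=\mathrm{S}_\Gamma h$, and boundedness of $\mathrm{S}_\Gamma$ converts coercivity on $H^{-1/2}(\Gamma)$ into the stated bound at the mild cost of a degraded constant $c_0=\tilde c/C^2$. You then prove the coercive-plus-compact splitting by hand, subtracting the modified-Helmholtz single layer whose quadratic form is the potential's positive energy (the choice $\kappa>0$ is indeed the right move in 2D, where it sidesteps the logarithmic-capacity failure of coercivity for $-\Delta$ itself); you replace the paper's appeal to \cite[Theorem 2.13]{CK} by the equivalent far-field identity $\Im\langle h,\mathrm{S}_\Gamma h\rangle_\Gamma=\pm c\,\|w_\infty(h)\|^2_{L^2(\mathbb{S}^1)}$, noting correctly that with the conjugated kernel the potential is anti-radiating so the sign is fixed; and you re-prove the abstract upgrade inline via weak compactness, where your pinching step (the limit of same-signed quantities equals $-\Im\langle h_0,\mathrm{S}_\Gamma h_0\rangle_\Gamma$, which itself carries that sign, hence vanishes) is exactly the content of the cited lemma. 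What the paper's route buys is brevity, since two citations do the heavy lifting; what yours buys is a self-contained argument on the more elementary operator $\mathrm{S}_\Gamma$ rather than its inverse, making explicit where the radiation condition and the spectral hypothesis enter. One small correction: the spectral hypothesis is not used \emph{only} in your final Rellich/uniqueness step---it also underwrites the invertibility of $\mathrm{S}_\Gamma$ that makes $\mathrm{T}_\Gamma$ well defined and your substitution $g=\mathrm{S}_\Gamma h$ surjective in the first place (though, pleasantly, your coercivity bound for $\mathrm{S}_\Gamma$ re-derives its injectivity, and Fredholm theory then gives surjectivity).
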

\begin{proof}
We prove the theorem for $\Gamma=\partial \Omega$. The proof of the case that $\Gamma=\partial D$ is exactly the same.

(a) We first show \begin{eqnarray} \label{mo coercivity proof eqn1}
-\Im \langle  \mathrm{T}_\Gamma g,g \rangle_\Gamma   <0, \quad \forall g\not=0.
\end{eqnarray}

Let $\mathrm{T}_\Gamma g=h$. Let $w^\pm(x):=\int_{\partial \Omega} \overline{ \phi(x;y)} h(y) \ind s_y$ for $x  \in  \Omega^\pm$ where $\Omega^-=\Omega$ and $\Omega^+=\mathbb{R}^2\backslash \overline{\Omega}$. From equation \eqref{def T 2}, we have from the jump relations of the single layer potential that  $\overline{h}= \frac{\partial \overline{w}^-}{\partial \nu}|_{\partial \Omega}-\frac{\partial \overline{w}^+}{\partial \nu}|_{\partial \Omega}$ (here $\nu$ is the outward normal to $\partial \Omega$). Then we have that
\begin{eqnarray*}
&&-\Im \langle  \mathrm{T}_\Gamma g,g \rangle_\Gamma = \Im \overline{ \langle h,w^- \rangle_\Gamma }= \Im  \langle   \frac{\partial \overline{w}^-}{\partial \nu}-\frac{\partial \overline{w}^+}{\partial \nu}  ,\overline{w}^- \rangle_\Gamma \\
&=& \Im  \Big( \|\nabla \overline{w} \|^2_{L^2(B_R)} +   \|\overline{w}   \| ^2_{L^2(B_R)} - \int_{\partial B_R} \frac{\partial \overline{w}^+}{\partial \nu}  {w} \ind s \Big).  \mbox{ (Integration by parts)} \\
&=& - \Im  \int_{\partial B_R} \frac{\partial \overline{w}^+}{\partial \nu}  {w} \ind s = \Im  \int_{\partial B_R} \frac{\partial {w}^+}{\partial \nu}  \overline{w} \ind s .
\end{eqnarray*}
The proof is completed if
\begin{eqnarray} \label{mo coercivity proof eqn2}
-\Im \langle  \mathrm{T}_\Gamma g,g \rangle_\Gamma = \Im  \int_{\partial B_R} \frac{\partial {w}^+}{\partial \nu}  \overline{w} \ind s <0, \quad \forall g\not=0.
\end{eqnarray}
This is valid due to the following. Assume on the contrary that
\begin{eqnarray*}
-\Im \langle  \mathrm{T}_\Gamma g,g \rangle_\Gamma = \Im  \int_{\partial B_R} \frac{\partial {w}^+}{\partial \nu}  \overline{w} \ind s \ge 0,
\end{eqnarray*}
Since $\overline{w}$ is a radiating solution to the Helmholtz equation in $\mathbb{R}^2 \backslash \overline{B_R}$, by \cite[Theorem 2.13]{CK} $\overline{w}$ vanishes in $\mathbb{R}^2 \backslash \overline{B_R}$, and by unique continuation $\overline{w}$ vanishes in $\mathbb{R}^2 \backslash \overline{\Omega}$ and hence $w|_{\partial \Omega}=0$. Since ${w}$ satisfies the Helmholtz equation in $\Omega$, and the assumption that $-k^2$ is not an eigenvalue of $-\Delta$ in $\Omega$, we conclude that $w$ vanishes in $\Omega$. Recall that $\overline{w}$ vanishes in $\mathbb{R}^2 \backslash \overline{\Omega}$, then by the jump relation of the single layer potential, we conclude that $h$ vanishes and hence $g$ vanishes. This proves \eqref{mo coercivity proof eqn1}.

(b) Note that $\mathrm{T}_\Gamma$ is the sum of a strictly coercive self-adjoint operator and a compact operator \cite[Lemma 5.38]{CK}. This together with \cite[Lemma 7.28]{CaCo} or \cite[Lemma 5.37]{CK}, we conclude that
\begin{equation*}
\big| \langle  \mathrm{T}_\Gamma g,g \rangle_\Gamma \big| \ge c_0 \|g\|^2_{H^{1/2}(\Gamma)}.
\end{equation*}
and this completes the proof.
\end{proof}

\section{Inverse scattering for extended obstacles} \label{section obstacle}
\subsection{Near field operator and its factorization}
The near field operator $\mathrm{N}: L^2(\partial B) \to L^2(\partial B)$ is denoted by
\begin{equation}
\big( \mathrm{N} g \big) (x):= \int_{\partial B} u^s(x;y) g(y) \ind s_y, \quad  x \in \partial B.
\end{equation}
To facilitate the analysis, we define $ \mathrm{H}:L^2(\partial B) \to H^{1/2}(\partial \Omega)$ by
\begin{equation} \label{obstacle H def}
\big( \mathrm{H} g \big) (x):= \int_{\partial B} \phi(x;y) g(y) \ind s_y, \quad  x \in \partial \Omega,
\end{equation}
and thereby  $ \mathrm{H}^*:H^{-1/2}(\partial \Omega) \to L^2(\partial B) $ is given by (which can be directly verified again by the definition that $\langle \mathrm{H}^* h,g\rangle_{\partial B}=\langle h,\mathrm{H}g\rangle_{\partial \Omega}$)
\begin{equation} \label{obstacle H* def}
\big(\mathrm{H}^* h \big)(x) =  \int_{\partial \Omega} \overline{\phi(x;y)} h(y)  \ind s_y, \quad \forall h \in H^{-1/2}(\partial \Omega).
\end{equation}
For any function $g$ in a complex Banach space, we define the ``conjugate operator'' $\mathrm{R}$ by $\mathrm{R} g = \overline{g}$.

The near field operator has the following factorization.
\begin{theorem} \label{obstacle thm NO factorization}
It holds that
\begin{equation} \label{obstacle NO factorization}
\mathrm{N} = -  \mathrm{R} \mathrm{H}^* \mathrm{T}_{\partial \Omega}  \mathrm{R} \mathrm{H},
\end{equation}
where $ \mathrm{T}_{\partial \Omega}: H^{1/2}(\partial \Omega) \to H^{-1/2}(\partial \Omega)$ is defined via \eqref{def T 1}--\eqref{def T 2}.
\end{theorem}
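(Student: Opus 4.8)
The plan is to verify the factorization by tracking an arbitrary density $g\in L^2(\partial B)$ through both sides of \eqref{obstacle NO factorization} and then invoking well-posedness of the exterior Dirichlet problem. The starting observation is that, by linearity and superposition, $\mathrm{N}g$ is the restriction to $\partial B$ of the scattered field generated by the \emph{incident} field $v^i(x):=\int_{\partial B}\phi(x;y)g(y)\,\ind s_y$. Indeed, each $u^s(\cdot;y)$ is the radiating solution of \eqref{obstacle us eqn1}--\eqref{obstacle us eqn2} with Dirichlet data $-\phi(\cdot;y)$ on $\partial\Omega$, so $v^s:=\int_{\partial B}u^s(\cdot;y)g(y)\,\ind s_y$ is the radiating solution with data $-v^i$ on $\partial\Omega$, and $\mathrm{N}g=v^s|_{\partial B}$. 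Note in particular that $v^i|_{\partial\Omega}=\mathrm{H}g$ by \eqref{obstacle H def}.

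Next I would unwind the right-hand side from right to left. Applying $\mathrm{R}$ to $\mathrm{H}g$ gives $\overline{v^i}|_{\partial\Omega}$; then $h:=\mathrm{T}_{\partial\Omega}(\overline{v^i}|_{\partial\Omega})$ is, by the defining relation \eqref{def T 1}--\eqref{def T 2}, the density for which $\overline{v^i}(x)=\int_{\partial\Omega}\overline{\phi(x;z)}h(z)\,\ind s_z$ on $\partial\Omega$. Conjugating this identity shows that $v^i$ coincides on $\partial\Omega$ with the genuine outgoing single-layer potential $W(x):=\int_{\partial\Omega}\phi(x;z)\overline{h(z)}\,\ind s_z$. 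The remaining two operators simply transport this potential to $\partial B$: since $\mathrm{H}^* h$ evaluates $\int_{\partial\Omega}\overline{\phi(x;z)}h(z)\,\ind s_z$ at points $x\in\partial B$, applying the final $\mathrm{R}$ produces exactly $W|_{\partial B}$. Hence the right-hand side of \eqref{obstacle NO factorization} equals $-W|_{\partial B}$, and it remains only to identify $-W$ with $v^s$.

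Here is where the scattering problem enters, and this is the step I expect to carry the real content. The function $W$ is a radiating single-layer potential with the outgoing kernel $\phi$, so $-W$ satisfies the Helmholtz equation in $\mathbb{R}^2\setminus\overline{\Omega}$ together with the Sommerfeld radiation condition, and on $\partial\Omega$ it has Dirichlet data $-W|_{\partial\Omega}=-v^i|_{\partial\Omega}$, which is precisely the data satisfied by $v^s$. By uniqueness of the exterior Dirichlet problem (the well-posedness quoted after \eqref{obstacle us eqn2}) I conclude $v^s=-W$ throughout $\mathbb{R}^2\setminus\overline{\Omega}$, and in particular on $\partial B$ since $\overline{\Omega}\subset B$. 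Therefore $\mathrm{N}g=v^s|_{\partial B}=-W|_{\partial B}=-\mathrm{R}\mathrm{H}^*\mathrm{T}_{\partial\Omega}\mathrm{R}\mathrm{H}g$, which is the claimed factorization.

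The only delicate points are bookkeeping ones: one must keep careful track of the two conjugation operators $\mathrm{R}$, whose role is to reconcile the conjugate kernel $\overline{\phi}$ appearing in the definitions of $\mathrm{T}_{\partial\Omega}$ and $\mathrm{H}^*$ with the physical outgoing kernel $\phi$ needed to represent a radiating field. One should also confirm that $\mathrm{T}_{\partial\Omega}$ is well defined on the data at hand, which is guaranteed by the standing assumption that $k^2$ is not a Dirichlet eigenvalue of $-\Delta$ in $\Omega$, the same hypothesis underlying Lemma \ref{mo coercivity}.
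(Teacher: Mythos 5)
Your proof is correct and follows essentially the same route as the paper: both identify $\mathrm{N}g$ via superposition as the trace on $\partial B$ of the scattered field whose incident field has trace $\mathrm{H}g$ on $\partial\Omega$, and both represent that scattered field as an outgoing single-layer potential whose density is obtained from $\mathrm{T}_{\partial\Omega}$ after conjugation. The only difference is direction of the bookkeeping: the paper posits the single-layer ansatz and solves for the density, whereas you unwind the operator composition to $-W|_{\partial B}$ and then invoke uniqueness of the exterior Dirichlet problem explicitly --- a step the paper's ansatz leaves implicit.
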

\begin{proof}
Let $w(x):=\int_{\partial B} u^s(x;y) g(y) \ind s_y$ for $x \in \mathbb{R}^2 \backslash \overline{\Omega}$. From the definition of   $\mathrm{H}$ \eqref{obstacle H def} and superposition principle, we have that
\begin{eqnarray*}
\Delta w + k^2 w = 0 \quad &\mbox{in}& \mathbb{R}^2 \backslash \overline{\Omega}, \quad \\
w = - \mathrm{H} g \quad &\mbox{on}& \quad \partial \Omega,
\end{eqnarray*}
and $w$ satisfies the Sommerfeld radiation condition. Thereby we look for
$$
w(x) = \int_{\partial \Omega}  {\phi(x;y)} \psi(y) \ind s_y
$$
with unknown density $\psi \in H^{-1/2}(\partial \Omega)$. From the boundary condition of $w$ on $\partial \Omega$, we have that
$$
\int_{\partial \Omega}  {\phi(x;y)} \psi(y) \ind s_y = - \mathrm{H} g \mbox{ on } \partial \Omega.
$$
Taking the conjugate of the above equation yields
$$
\int_{\partial \Omega}  \overline{\phi(x;y)} \overline{\psi(y)} \ind s_y = - \overline{\mathrm{H} g} \mbox{ on } \partial \Omega.
$$
This together with the definition of $\mathrm{T}_{\partial \Omega}$ \eqref{def T 1}--\eqref{def T 2} yields that the density is solved by $\overline{\psi}=\mathrm{T}_{\partial \Omega} (- \overline{\mathrm{H} g}) = \mathrm{T}_{\partial \Omega} \mathrm{R}(-  \mathrm{H} g)$. Therefore we have that, for any $x\in \partial B$,
\begin{eqnarray*}
\big( \mathrm{N} g \big) (x) &=& w(x)=\mathrm{R} \overline{w}=\mathrm{R} \int_{\partial \Omega} \overline{\phi(x;y)} \overline{\psi(y)} \ind s_y =\mathrm{R} \int_{\partial \Omega} \overline{\phi(x;y)} \big(\mathrm{T}_{\partial \Omega}  \mathrm{R}(- \mathrm{H} g) \big) \ind s_y \\
&=& - \mathrm{R} \mathrm{H}^* \mathrm{T}_{\partial \Omega} \mathrm{R} \mathrm{H} g.
\end{eqnarray*}
This completes the proof.
\end{proof}

There are other ways to factorize the near field operator \cite{Hu2014} using near field measurements. The key point is that these factorizations are non-symmetric factorizations which is in contrast to the symmetric factorization using far-field measurements. We show in the following section how to design a sampling method even though the factorization is non-symmetric. Furthermore, the sampling method is completely theoretically justified in the sense that the proposed imaging function has both an upper and lower bound which peak when the sampling point is at the obstacle boundary.
\subsection{Imaging function}
We introduce the imaging function
\begin{equation} \label{imaging function obstacle}
I_{obstacle}(z):= \big| \langle   \mathrm{N} \varphi_z, \overline{\varphi_z} \rangle_{\partial B} \big|
\end{equation}
where $\langle \cdot, \cdot \rangle_{\partial B}$ denotes the $L^2(\partial B)$-inner product, and $\varphi_z$ is given by
\begin{equation} \label{obstacle varphi def}
\varphi_{z}(y): = \sum_{n=-M}^M \frac{4}{i |y| \pi(1+\delta_{0n})}  \frac{J_n(k|z|) }{H_n^{(1)}(k|y|)}\cos(n\theta_{yz}), \quad y\in \partial B
\end{equation}
with $M$ chosen to be a positive integer and $\theta_{xy}$ denoting the angle between $x$  and $y$  for any $x,y \in \mathbb{R}^2$.  Moreover we also consider
\begin{equation} \label{obstacle varphi infty def}
\varphi_{z,\infty}(y): = \sum_{n=-\infty}^\infty \frac{4}{i |y| \pi(1+\delta_{0n})} \frac{J_n(k|z|) }{H_n^{(1)}(k|y|)} \cos(n\theta_{yz}), \quad y\in \partial B,
\end{equation}
which is the infinite series version of \eqref{obstacle varphi def}.

\begin{remark}
The function
$\varphi_{z,\infty}$ \eqref{obstacle varphi infty def} is well-defined. Indeed, note that $H_n^{(1)}(k|y|)=H_n^{(1)}(kr_o)$ does not vanish, so first $\varphi_z(y)$ \eqref{obstacle varphi def} is well-defined. Letting $M \to +\infty$, we have from the asymptotic of Hankel and Bessel functions \cite[Section 3.5]{CK} that
\begin{eqnarray*}
H_n^{(1)} (k r_0) \sim \frac{2^n (n-1)!}{\pi i (k r_o)^n}, \quad J_n(k|z|) \sim  \frac{ (k |z|)^n}{2^n  n!}, \quad n \gg 1,
\end{eqnarray*}
which yields that
\begin{equation*}
 \frac{J_n(k|z|) }{H_n^{(1)}(k|y|)} \sim  \frac{ (k |z|)^n \pi i (k r_o)^n}{2^n  n! 2^n (n-1)!}
\end{equation*}
and hence by ratio test $\varphi_{z,\infty}$ \eqref{obstacle varphi infty def} is well-defined for sampling points $z$ in a bounded sampling region.
\end{remark}

The next theorem gives the explicit expressions of $\mathrm{H} \varphi_z (x)$ and $\mathrm{H} \varphi_{z,\infty} (x)$.

\begin{theorem} \label{H varphi theorem}
It holds that
\begin{equation} \label{H varphi eqn}
\Big( \mathrm{H} \varphi_z \Big) (x) =  \sum_{n=-M}^M J_n(k|x|) J_n(k|z|) \cos(n\theta_{xz}), \quad x\in \partial \Omega,
\end{equation}
and
\begin{equation}  \label{H varphi infty eqn}
\Big( \mathrm{H} \varphi_{z,\infty} \Big) (x) =  \sum_{n=-\infty}^\infty J_n(k|x|) J_n(k|z|) \cos(n\theta_{xz}) = J_0(k|x-z|), \quad x\in \partial \Omega.
\end{equation}
Here the convergence is uniformly for $x\in \partial \Omega$ and $z$ in a bounded sampling region. 
\end{theorem}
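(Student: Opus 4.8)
The plan is to substitute the Jacobi--Anger/Graf addition theorem for the fundamental solution into the definition \eqref{obstacle H def} of $\mathrm{H}$ and then collapse the resulting double series by orthogonality of the angular modes. Since every $x\in\partial\Omega$ lies inside $B$ while $y\in\partial B$ satisfies $|y|=r_o>|x|$, the Hankel factor carries the larger argument and one may expand
\begin{equation*}
\phi(x;y)=\frac{i}{4}H^1_0(k|x-y|)=\frac{i}{4}\sum_{m=0}^\infty \epsilon_m\,J_m(k|x|)\,H_m^{(1)}(k|y|)\cos(m\theta_{xy}),
\end{equation*}
where $\epsilon_0=1$, $\epsilon_m=2$ for $m\ge1$ are the Neumann factors and $\theta_{xy}$ is the angle between $x$ and $y$; equivalently one may use the exponential form $\sum_{m\in\mathbb{Z}}J_m(k|x|)H_m^{(1)}(k|y|)e^{im\theta_{xy}}$. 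The crucial observation, and the very reason $\varphi_z$ is defined as in \eqref{obstacle varphi def}, is that the factor $1/H_n^{(1)}(k|y|)$ there is placed precisely to cancel the Hankel factor $H_m^{(1)}(k|y|)$ generated by the addition theorem once the angular integration has forced $m=n$.

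For the finite sum \eqref{H varphi eqn} I would insert this expansion together with \eqref{obstacle varphi def} into $(\mathrm{H}\varphi_z)(x)=\int_{\partial B}\phi(x;y)\varphi_z(y)\ind s_y$, parametrize $\partial B$ by its polar angle so that $|y|=r_o$ and $\ind s_y=r_o\ind\theta_y$, and interchange the finite sum with the integral. The constant prefactors then collapse cleanly: the $i/4$ in $\phi$ cancels the $4/i$ in $\varphi_z$, the factor $r_o$ from $\ind s_y$ cancels the $1/|y|$ in $\varphi_z$, and the remaining angular integral $\int_0^{2\pi}\cos(m\theta_{xy})\cos(n\theta_{yz})\ind\theta_y$ is, by orthogonality, nonzero only when $m=n$, in which case it equals $(2\pi/\epsilon_n)\cos(n\theta_{xz})$. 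This eliminates the $m$-sum, cancels $H_n^{(1)}(k|y|)$, and---with the $(1+\delta_{0n})$ normalization chosen exactly to absorb the orthogonality constant and the Neumann factor---leaves precisely the finite series asserted in \eqref{H varphi eqn}.

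For the infinite series \eqref{H varphi infty eqn} I would pass to the limit $M\to\infty$. This is the only genuinely delicate point, since the interchange of summation and integration is no longer automatic: it must be justified by the factorial decay of $J_n(k|z|)/H_n^{(1)}(kr_o)$ recorded in the Remark, which yields uniform (in fact geometric) convergence of $\varphi_{z,\infty}$ and of the resulting series for $x\in\partial\Omega$ and $z$ in a bounded sampling region, so that the term-by-term computation above survives the limit. It then remains to identify the limiting series $\sum_{n\in\mathbb{Z}}J_n(k|x|)J_n(k|z|)\cos(n\theta_{xz})$ with $J_0(k|x-z|)$; this is exactly Graf's addition theorem for $J_0$ (the cosine form $J_0(k|x-z|)=\sum_{n\ge0}\epsilon_n J_n(k|x|)J_n(k|z|)\cos(n\theta_{xz})$), valid for all $x,z$ because the Bessel functions are entire. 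The main obstacle is therefore not the algebra but the uniform control of the tails; once the Bessel and Hankel asymptotics from the Remark are in hand, both the convergence and the limiting identity follow, and the claimed uniformity for $x\in\partial\Omega$ and bounded $z$ is inherited from the same estimate.
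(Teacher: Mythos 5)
Your proposal is correct and follows essentially the same route as the paper's proof: insert the addition-theorem expansion of $\phi(x;y)$ into the definition of $\mathrm{H}$, collapse the double series by angular orthogonality (your $(2\pi/\epsilon_n)\cos(n\theta_{xz})$ agrees with the paper's $\pi(1+\delta_{0n})\cos(n\theta_{xz})$), and pass to the limit $M\to\infty$ using the Bessel/Hankel asymptotics. The only cosmetic difference is at the final identification: you invoke the cosine form of Graf's addition theorem for $J_0$ directly, while the paper obtains the same identity by taking real parts of the $H_0^{(1)}(k|x-z|)$ expansions, which is the same fact.
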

\begin{proof}
We first prove \eqref{H varphi eqn}.
From \cite{CaCo,CK},
\begin{equation*}
\phi(x;y) = \frac{i}{4}\sum_{n=-\infty}^\infty H_n^{(1)}(k|y|) J_n(k|x|) \cos(n\theta_{xy}), \quad x\in\partial \Omega, y\in \partial B.
\end{equation*}
This together with \eqref{obstacle varphi def} yields that
\begin{eqnarray*}
\Big( \mathrm{H} \varphi_z \Big) (x) &=&  \int_{\partial B} \phi(x;y) \varphi_z(y) \ind s_y \\
&=& r_o \int_{-\pi}^\pi \Big(\frac{i}{4}\sum_{n=-\infty}^\infty H_n^{(1)}(kr_o) J_n(k|x|) \cos(n\theta_{xy})  \Big) \\
&&\qquad \times \Big( \sum_{n=-M}^M\frac{4}{i r_o\pi(1+\delta_{0m})}  \frac{J_n(k|z|) }{H_n^{(1)}(kr_o)} \cos(m\theta_{yz}) \Big) \ind \theta_y \\
&=&  \sum_{n=-M}^M J_n(k|x|)   J_n(k|z|) \cos(n\theta_{xz}),  \end{eqnarray*}
where in the last step we have applied the orthogonality property  that
\begin{eqnarray}
&& \int_{-\pi}^\pi \cos(n\theta_{xy})\cos(m\theta_{yz}) \ind \theta_y \nonumber\\
 &=& \int_{-\pi}^\pi \big( \cos (n \theta_x) \cos (n \theta_y) + \sin (n \theta_x) \sin (n \theta_y) \big) \big( \cos (m \theta_z) \cos (m \theta_y) + \sin (m \theta_z) \sin (m \theta_y) \big)  \ind \theta_y \nonumber\\
 &=& \cos (n \theta_x) \cos (n \theta_z) \pi(1+\delta_{0n}) +  \sin (n \theta_x)  \sin (n \theta_z)  \pi = \pi (1+\delta_{0n})  \cos(n\theta_{xz}),\label{cos thetaxz orthogonality}
\end{eqnarray}
where $\delta$ is the Kronecker delta.
 This completes the proof of \eqref{H varphi eqn}. Letting $M\to \infty$ and using the asymptotic of the Bessel function \cite[Section 3.5]{CK}, we prove
\begin{equation*}
\Big( \mathrm{H} \varphi_{z,\infty} \Big) (x) =  \sum_{n=-\infty}^\infty J_n(k|x|) J_n(k|z|) \cos(n\theta_{xz}), \quad x\in \partial \Omega,
\end{equation*}
where the convergence is uniformly for $x\in \partial \Omega$ and $z$ in a bounded sampling region.
Note that \cite{CaCo,CK} when $|x|>|z|>0$,
\begin{equation*}
 H_0^{(1)}(k|x-z|) = \sum_{n=-\infty}^\infty H_n^{(1)}(k|x|) J_n(k|z|) \cos(n\theta_{xz}), \quad |x|>|z|>0
\end{equation*}
and when $|z|>|x|>0$
\begin{equation*}
 H_0^{(1)}(k|x-z|) = \sum_{n=-\infty}^\infty H_n^{(1)}(k|z|) J_n(k|x|) \cos(n\theta_{zx}), \quad |z|>|x|>0,
\end{equation*}
we then take the real parts of the above equations to get
$$
 \sum_{n=-\infty}^\infty J_n(k|x|) J_n(k|z|) \cos(n\theta_{xz}) =  J_0(k|x-z|), \quad x\in \partial \Omega.
$$
This proves \eqref{H varphi infty eqn}. This completes the proof.
\end{proof}

The following theorem plays an important role in the analysis of the imaging function. Recall the assumption that $k^2$ is not an eigenvalue of $-\Delta$ in the domain  $\Omega$.
\begin{theorem} \label{obstacle Iz theorem}
There exists positive constants $c'_1, c''_1$ and $c'_2, c''_2$ independent of $z$ such that
\begin{equation}
c'_1 \|  \mathrm{H} \varphi_z\|^2_{H^{1/2}(\partial \Omega)}\le \big| I_{obstacle}(z)\big| \le c'_2  \|  \mathrm{H} \varphi_z\|^2_{H^{1/2}(\partial \Omega)}.
\end{equation}
and
\begin{equation}
c''_1 \|  \mathrm{H} \varphi_{z,\infty}\|^2_{H^{1/2}(\partial \Omega)}\le \big| I_{obstacle}(z)\big| \le c''_2  \|  \mathrm{H} \varphi_{z,\infty}\|^2_{H^{1/2}(\partial \Omega)}.
\end{equation}
\end{theorem}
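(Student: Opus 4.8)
The plan is to reduce the imaging functional to the coercive quadratic form controlled by Lemma \ref{mo coercivity}, using the factorization of Theorem \ref{obstacle thm NO factorization} together with the \emph{explicit, real-valued} formula for $\mathrm{H}\varphi_z$ from Theorem \ref{H varphi theorem}. First I would unfold $\langle \mathrm{N}\varphi_z, \overline{\varphi_z}\rangle_{\partial B}$. Because the $L^2(\partial B)$ inner product conjugates its second argument, the deliberate choice of $\overline{\varphi_z}$ in that slot turns the functional into the bilinear expression $\int_{\partial B}(\mathrm{N}\varphi_z)\varphi_z\,\ind s$. Substituting $\mathrm{N} = -\mathrm{R}\mathrm{H}^*\mathrm{T}_{\partial\Omega}\mathrm{R}\mathrm{H}$, using $\mathrm{R}g=\overline{g}$ to pull the outer conjugation outside the integral, and then applying the adjoint identity $\langle \mathrm{H}^*h,g\rangle_{\partial B}=\langle h,\mathrm{H}g\rangle_{\partial\Omega}$, I expect to obtain
\begin{equation*}
\langle \mathrm{N}\varphi_z, \overline{\varphi_z}\rangle_{\partial B} = -\,\overline{\big\langle \mathrm{T}_{\partial\Omega}\,\overline{\mathrm{H}\varphi_z},\,\mathrm{H}\varphi_z\big\rangle_{\partial\Omega}},
\end{equation*}
so that $I_{obstacle}(z)=\big|\langle \mathrm{T}_{\partial\Omega}\,\overline{\mathrm{H}\varphi_z},\,\mathrm{H}\varphi_z\rangle_{\partial\Omega}\big|$. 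I anticipate that the careful bookkeeping of the two conjugation operators $\mathrm{R}$ and of the conjugate in the pairing is the step most prone to sign errors, so I would track each conjugation explicitly.

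The decisive observation is that $\mathrm{H}\varphi_z$ is real-valued, which is exactly what neutralizes the non-symmetric (conjugated) factorization. Indeed, Theorem \ref{H varphi theorem} gives $(\mathrm{H}\varphi_z)(x)=\sum_{n=-M}^M J_n(k|x|)J_n(k|z|)\cos(n\theta_{xz})$, a finite sum of products of Bessel functions of real argument with cosines, hence real; the same holds for $\mathrm{H}\varphi_{z,\infty}=J_0(k|\cdot-z|)$. Therefore $\overline{\mathrm{H}\varphi_z}=\mathrm{H}\varphi_z$, and the apparently mismatched pairing collapses to
\begin{equation*}
I_{obstacle}(z)=\big|\langle \mathrm{T}_{\partial\Omega}\,\mathrm{H}\varphi_z,\,\mathrm{H}\varphi_z\rangle_{\partial\Omega}\big|,
\end{equation*}
which is precisely the coercive quadratic form appearing in Lemma \ref{mo coercivity}. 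This reality of $\mathrm{H}\varphi_z$, engineered by the special construction \eqref{obstacle varphi def}, is the conceptual heart of the argument and the point where the ``modified'' design pays off.

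With the functional in this form the two-sided estimate is routine. The lower bound is immediate from Lemma \ref{mo coercivity}, giving $I_{obstacle}(z)\ge c_0\|\mathrm{H}\varphi_z\|^2_{H^{1/2}(\partial\Omega)}$ with $c_0$ independent of $z$. For the upper bound I would use that $\mathrm{T}_{\partial\Omega}\colon H^{1/2}(\partial\Omega)\to H^{-1/2}(\partial\Omega)$ is bounded (being the sum of a strictly coercive self-adjoint operator and a compact operator, as recorded in part (b) of the proof of Lemma \ref{mo coercivity}), so that $I_{obstacle}(z)\le \|\mathrm{T}_{\partial\Omega}\|\,\|\mathrm{H}\varphi_z\|^2_{H^{1/2}(\partial\Omega)}$; here I also use $\|\overline{\mathrm{H}\varphi_z}\|=\|\mathrm{H}\varphi_z\|$, which remains valid even if reality were not invoked. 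Both constants depend only on $\Omega$ and $k$, hence are independent of the sampling point $z$, yielding the first displayed inequality with $c_1'=c_0$ and $c_2'=\|\mathrm{T}_{\partial\Omega}\|$. The second inequality follows by the identical chain of reasoning applied to $\varphi_{z,\infty}$, whose image $\mathrm{H}\varphi_{z,\infty}=J_0(k|\cdot-z|)$ is again real-valued, so the same constants serve as $c_1''$ and $c_2''$.
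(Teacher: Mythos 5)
Your proposal is correct and follows essentially the same route as the paper's own proof: factorize via Theorem \ref{obstacle thm NO factorization}, use the adjoint identity to move to the pairing on $\partial\Omega$, invoke the real-valuedness of $\mathrm{H}\varphi_z$ and $\mathrm{H}\varphi_{z,\infty}$ from Theorem \ref{H varphi theorem} to collapse the conjugations, and then apply Lemma \ref{mo coercivity} for the lower bound and the boundedness of $\mathrm{T}_{\partial\Omega}$ for the upper bound. Your intermediate identity $\langle \mathrm{N}\varphi_z,\overline{\varphi_z}\rangle_{\partial B} = -\overline{\langle \mathrm{T}_{\partial\Omega}\overline{\mathrm{H}\varphi_z},\mathrm{H}\varphi_z\rangle}_{\partial\Omega}$ matches the paper's equation \eqref{obstacle Iz proof eqn1}, and your explicit bookkeeping of the conjugations is, if anything, slightly more careful than the paper's.
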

\begin{proof}
From the factorization of $\mathrm{N}$, we have that
\begin{equation} \label{obstacle Iz proof eqn1}
- \langle   \mathrm{N} \varphi_z, \overline{\varphi_z} \rangle_{\partial B} =\langle   \mathrm{R} \mathrm{H}^* \mathrm{T}_{\partial \Omega}  \mathrm{R} \mathrm{H} \varphi_z, \overline{\varphi_z} \rangle_{\partial B} =\overline{ \langle \mathrm{H}^* \mathrm{T}_{\partial \Omega}   \mathrm{R} \mathrm{H} \varphi_z, \varphi_z \rangle}_{\partial B} =\overline{ \langle  \mathrm{T}_{\partial \Omega}  \mathrm{R} \mathrm{H} \varphi_z, \mathrm{H}\varphi_z \rangle}_{\partial \Omega} .
\end{equation}
By Theorem \ref{H varphi theorem}, we have that $\mathrm{H}\varphi_z$ is indeed a real-valued function in $H^{1/2}(\partial \Omega)$, therefore
\begin{eqnarray*}
\big| I_{obstacle}(z)\big| =  \big| \langle  \mathrm{T}_{\partial \Omega}   \mathrm{H} \varphi_z, \mathrm{H}\varphi_z \rangle \big|.
\end{eqnarray*}
Therefore we can apply the coercivity estimate in Lemma \ref{mo coercivity} to derive that
\begin{eqnarray*}
\big| I_{obstacle}(z)\big| =   \big| \langle  \mathrm{T}_{\partial \Omega}  \mathrm{H} \varphi_z, \mathrm{H}\varphi_z \rangle \big| \ge c'_1 \|  \mathrm{H} \varphi_z\|^2_{H^{1/2}(\partial \Omega)}.
\end{eqnarray*}
The upper bound follows from the fact that $\mathrm{T}_{\partial \Omega}$ is bounded. Note that both the positive constants $c'_1$ and $c'_2$ are independent of $z$. Using exactly the same argument, the same estimate (with  constants $c''_1$ and $c''_2$) also holds when replacing $\varphi_{z}$ by $\varphi_{z,\infty}$. This completes the proof.
\end{proof}

\begin{remark}
Though the factorization  \eqref{obstacle NO factorization} of the near field operator  is non-symmetric, we are still able to design our modified sampling method. The key point is that $\mathrm{H}\varphi_z$ and $\mathrm{H}\varphi_{z,\infty}$ are indeed real-valued functions in $H^{1/2}(\partial \Omega)$ by Theorem \ref{H varphi theorem}. Furthermore, the sampling method is completely theoretically justified in the sense that the proposed imaging function has both an upper and lower bound which peak when the sampling point is at the obstacle boundary.

\end{remark}

It is seen from Theorem \ref{H varphi theorem} that
\begin{equation}
\Big( \mathrm{H} \varphi_{z,\infty} \Big) (x) = J_0(k|x-z|),
\quad x\in \partial \Omega,
\end{equation}
and when $M$ is large,
\begin{equation}
\Big( \mathrm{H} \varphi_z \Big) (x) \approx
J_0(k|x-z|),
\quad x\in \partial \Omega.
\end{equation}
In this sense, we find that when the sampling point $z$ approaches $x \in \partial \Omega$, $\big(\mathrm{H} \varphi_z \big)(x)$ peaks  when $z$ coincides with $x$.
Unfortunately, it is not possible to plot  $\mathrm{H} \varphi_z\|^2_{H^{1/2}(\partial \Omega)}$ since $\Omega$ is the unknown obstacle. What is significant about Theorem \ref{obstacle Iz theorem} is that it shows that the imaging function $I_{obstacle}(z)$ is qualitatively the same as $\|  \mathrm{H} \varphi_z\|^2_{H^{1/2}(\partial \Omega)}$ without the knowledge of $\Omega$.  
Therefore from Theorem \ref{obstacle Iz theorem} and Theorem \ref{H varphi theorem}, we  conclude that  the imaging function $I_{obstacle}(z)$ peaks  when the  sampling point $z$ is at the obstacle boundary $\partial \Omega$.

\begin{remark}
The main feature of sampling methods in the literature \cite{CCH2013,ItoJinZou} is that only inner product of the measurements with some suitably chosen functions is involved in the imaging function. In these works, numerical algorithms illustrated the performance of the imaging method and the analysis was done when the measurement surface is not close to the scatterer, which might be due to that those suitably chosen functions are ``distance'' related function (as contrary to considering the plane wave in the far-field case), e.g. depending on the distance between the measurement surface and the scatterers. The chosen functions $\varphi_{z,\infty}$  \eqref{obstacle varphi infty def} and $\varphi_{z}$  \eqref{obstacle varphi def} are different from the existing chosen functions in the literature \cite{CCH2013,ItoJinZou} and they play a very important role in the analysis of our modified sampling method. Though our chosen $\varphi_{z,\infty}$  and $\varphi_{z}$ are also ``distance'' related, they ``cancel'' such ``distance'' together with $\mathrm{H} \varphi_z (x)$ and $\mathrm{H} \varphi_{z,\infty} (x)$ without making any asymptotic assumptions on the distance between the measurement surface and the scatterers (see Theorem \ref{H varphi theorem}).
\end{remark}

\begin{remark}
In the sampling method using far-field measurements \cite{LiuIP17}, it is seen from the Jacob-Anger expansion of $e^{ik x\cdot d}$ that the imaging function $I(z)$ in \cite{LiuIP17} in fact has lower and upper bound given by
$$
c_1 \Big\| \int_{|d|=1} e^{ik x\cdot d} e^{-ik z\cdot d}  \ind s_d \Big\|^2_{H^{1/2}(\partial \Omega)} \le I(z) \le c_2 \Big\| \int_{|d|=1} e^{ik x\cdot d} e^{-ik z\cdot d}  \ind s_d \Big\|^2_{H^{1/2}(\partial \Omega)},
$$
and
$$
\Big\| \int_{|d|=1} e^{ik x\cdot d} e^{-ik z\cdot d}  \ind s_d \Big\|_{H^{1/2}(\partial \Omega)} \sim c\Big\|J_0(k|x-z|) \Big\|_{H^{1/2}(\partial \Omega)} \mbox{ for some constant } c.
$$
Together with \eqref{H varphi infty eqn} in Theorem \ref{H varphi theorem}, we find that this resolution with far-field measurements is in fact the same as the resolution using our modified sampling method with near field measurements. This theoretical result is further confirmed via numerical examples in Section \ref{section numerical examples}.
\end{remark}

Finally we summarize the imaging algorithm.

\vspace{0.5\baselineskip}

\noindent\textbf{Imaging Algorithm for obstacles.}
\
\begin{itemize}
\item Collect the  near field measurements $u^s(x;y), \,x,y \in \partial B$.
\item Select a sampling region in $\mathbb{R}^2$ with a fine mesh containing $\Omega$.
\item Compute the imaging function $I_{obstacle}(z)$ in \eqref{imaging function obstacle} with $\varphi_z$ given by \eqref{obstacle varphi def}  for all sampling points.
\item Plot the imaging function $I_{obstacle}(z)$ over the sampling region to determine $\Omega$.
\end{itemize}

\section{Interior inverse scattering for cavities} \label{section cavity}
\subsection{Near field operator and its factorization}
The near field operator $\mathrm{N}_{\partial C}: L^2(\partial C) \to L^2(\partial C)$ is denoted by
\begin{equation} \label{cavity N def}
\big( \mathrm{N}_{\partial C} g \big) (x):= \int_{\partial C} u^s(x;y) g(y) \ind s_y, \quad  x \in \partial C.
\end{equation}
To facilitate the analysis, we define $ \mathrm{S}:L^2(\partial C) \to H^{1/2}(\partial D)$ by
\begin{equation} \label{cavity S def}
\big( \mathrm{S} g \big) (x):= \int_{\partial C} \phi(x;y) g(y) \ind s_y, \quad  x \in \partial D,
\end{equation}
and thereby  $ \mathrm{S}^*:H^{-1/2}(\partial D) \to L^2(\partial C) $ is given by (which can be directly verified by the definition that $\langle \mathrm{S}^* h,g\rangle_{\partial C}=\langle h,\mathrm{S}g\rangle_{\partial D}$)
\begin{equation} \label{cavity S* def}
\big(\mathrm{S}^* h \big)(x) =  \int_{\partial D} \overline{\phi(x;y)} h(y)  \ind s_y, \quad \forall h \in H^{-1/2}(\partial D).
\end{equation}
The near field operator has the following factorization.
\begin{theorem} It holds that
\begin{equation} \label{cavity NO factorization}
\mathrm{N}_{\partial C} = -  \mathrm{S}^* \mathrm{T}_{\partial D} \mathrm{S},
\end{equation}
where $ \mathrm{T}_{\partial D}: H^{1/2}(\partial D) \to H^{-1/2}(\partial D)$ is defined via \eqref{def T 1}--\eqref{def T 2}.
\end{theorem}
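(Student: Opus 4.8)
The plan is to mirror the argument of Theorem \ref{obstacle thm NO factorization}, but to exploit the fact that the cavity problem is an \emph{interior} boundary value problem with no radiation condition; this is precisely what will eliminate the conjugation operator $\mathrm{R}$ and produce a \emph{symmetric} factorization. First I would fix $g \in L^2(\partial C)$ and set
\[
w(x) := \int_{\partial C} u^s(x;y) g(y) \ind s_y, \quad x \in D.
\]
By the superposition principle and the governing equations \eqref{cavity us eqn1}--\eqref{cavity us eqn2}, $w$ solves $\Delta w + k^2 w = 0$ in $D$ with boundary data $w = -\mathrm{S}g$ on $\partial D$, the identity $w|_{\partial D} = -\mathrm{S} g$ following directly from the definition \eqref{cavity S def} of $\mathrm{S}$ together with $u^s(\cdot;y) = -\phi(\cdot;y)$ on $\partial D$. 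Since $u^s(\cdot;y)$ is a smooth Helmholtz solution throughout $D$ (its boundary data $-\phi(\cdot;y)$ is smooth because $y \in \partial C$ lies away from $\partial D$), $w$ is well-defined and smooth on $D$, in particular on $\partial C$.

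The key step is to represent $w$ by a single-layer potential over $\partial D$ using the \emph{conjugated} kernel $\overline{\phi}$, which is exactly the kernel appearing in the definitions of $\mathrm{T}_{\partial D}$ and $\mathrm{S}^*$:
\[
w(x) = \int_{\partial D} \overline{\phi(x;y)} \psi(y) \ind s_y, \quad x \in D,
\]
with unknown density $\psi \in H^{-1/2}(\partial D)$. Unlike the exterior obstacle problem, here no radiation condition forces us to use the outgoing kernel $\phi$, so no conjugation (and hence no $\mathrm{R}$) is needed. Taking the trace on $\partial D$ and matching it to $w|_{\partial D} = -\mathrm{S}g$ gives
\[
\int_{\partial D} \overline{\phi(x;y)} \psi(y) \ind s_y = -\mathrm{S}g, \quad x \in \partial D,
\]
so by the very definition \eqref{def T 1}--\eqref{def T 2} of $\mathrm{T}_{\partial D}$ the density is recovered as $\psi = \mathrm{T}_{\partial D}(-\mathrm{S}g) = -\mathrm{T}_{\partial D}\mathrm{S}g$. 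Finally, restricting $w$ to $x \in \partial C$ and recognizing the resulting integral as $\mathrm{S}^*\psi$ via \eqref{cavity S* def} yields $\mathrm{N}_{\partial C}g = w|_{\partial C} = \mathrm{S}^*\psi = -\mathrm{S}^*\mathrm{T}_{\partial D}\mathrm{S}g$, which is the claimed factorization \eqref{cavity NO factorization}.

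The main point requiring care — and essentially the only real obstacle — is the validity of the single-layer representation: I must know that the interior Dirichlet problem in $D$ is uniquely solvable and that its solution admits a representation of the above form with density recovered by $\mathrm{T}_{\partial D}$. Both are supplied by the standing assumption that $k^2$ is not a Dirichlet eigenvalue of $-\Delta$ in $D$, under which the integral equation \eqref{def T 2} is uniquely solvable (as noted immediately after its statement); uniqueness for the interior problem then identifies the constructed single-layer potential with $w$ on all of $D$, including on $\partial C$. The remaining ingredients — the mapping properties of $\mathrm{S}$, $\mathrm{S}^*$ and $\mathrm{T}_{\partial D}$ between the relevant trace spaces, and the smoothness of $w$ on $\partial C$ — are routine potential-theoretic facts, entirely parallel to the obstacle case, and introduce no new difficulty.
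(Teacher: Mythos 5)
Your proposal is correct and follows essentially the same route as the paper's own proof: define $w$ as the superposition of scattered fields, represent it as a single-layer potential over $\partial D$ with the conjugated kernel $\overline{\phi}$, recover the density via $\mathrm{T}_{\partial D}$, and evaluate on $\partial C$ to read off $-\mathrm{S}^*\mathrm{T}_{\partial D}\mathrm{S}$. Your added remarks --- that the interior problem needs no radiation condition (hence no conjugation operator $\mathrm{R}$, unlike the obstacle case) and that uniqueness of the interior Dirichlet problem under the standing eigenvalue assumption identifies the potential with $w$ on all of $D$ --- correctly supply details the paper leaves implicit.
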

\begin{proof}
The proof is almost the same as the proof of Theorem \ref{obstacle thm NO factorization}. For completeness we outline the proof.
Let $w(x):=\int_{\partial C} u^s(x;y) g(y) \ind s_y$ for $x \in D$. From the definition of   $\mathrm{S}$ \eqref{cavity S def} and superposition principle, we have that
\begin{eqnarray*}
\Delta w + k^2 w = 0 \quad &\mbox{in}& \quad D, \quad \\
w = - \mathrm{S} g \quad &\mbox{on}& \quad \partial D   .
\end{eqnarray*}
This is a classical boundary value problem which can be solved via integral equation method. In particular, we look for
$$
w(x) = \int_{\partial D} \overline{\phi(x;y)} \psi(y) \ind s_y
$$
with unknown density $\psi \in H^{-1/2}(\partial D)$. From the boundary condition of $w$ on $\partial D$, we have that
$$
\int_{\partial D} \overline{\phi(x;y)} \psi(y) \ind s_y = - \mathrm{S} g \mbox{ on } \partial D.
$$
This together with the definition of $\mathrm{T}_{\partial D}$ \eqref{def T 1}--\eqref{def T 2} yields that the density is solved by $\psi=\mathrm{T}_{\partial D} (- \mathrm{S} g)$. Therefore we have that, for any $x\in \partial C$,
$$
\big( \mathrm{N}_{\partial C} g \big) (x) = w(x)= \int_{\partial D} \overline{\phi(x;y)} \psi(y) \ind s_y = \int_{\partial D} \overline{\phi(x;y)} \big(\mathrm{T}_{\partial D} (- \mathrm{S} g) \big) \ind s_y = -  \mathrm{S}^* \mathrm{T}_{\partial D} \mathrm{S} g.
$$
This completes the proof.
\end{proof}
\subsection{Imaging function}
The imaging function is given by
\begin{equation}\label{imaging function cavity}
I_{cavity}(z):=\big| \langle   \mathrm{N}_{\partial C} \psi_z, \psi_z \rangle_{\partial C} \big|
\end{equation}
where $\langle \cdot, \cdot \rangle_{\partial C}$ denotes the $L^2(\partial C)$-inner product, and $\psi_z$ is given by
\begin{equation} \label{cavity psi def}
\psi_z(y): = \sum_{n=-\mathfrak{M}}^{\mathfrak{M}} \frac{4}{i|y|\pi(1+\delta_{0n})}    \frac{J_n(k|z|)}{J_n(k|y|)}\cos(n\theta_{yz}), \quad y\in \partial C,
\end{equation}
where {\footnotesize $\mathfrak{M}$} is chosen to be a positive integer.  Note that  $k^2$ is not an eigenvalue of $-\Delta$ in $C$, therefore $J_n(k|y|)=J_n(kr_i)$ never vanishes for any $n$, i.e.  $\psi_z$ \eqref{cavity psi def} is well-defined.   We now prove the resolution analysis result for the cavity case.

\begin{theorem} \label{cavity Iz theorem}
There exists positive constants $c_1$ and $c_2$ independent of $z$ such that
\begin{equation}
c_1 \|  \mathrm{S} \psi_z\|^2_{H^{1/2}(\partial D)}\le   I_{cavity}(z) \le c_2  \|  \mathrm{S} \psi_z\|^2_{H^{1/2}(\partial D)}.
\end{equation}
\end{theorem}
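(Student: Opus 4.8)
The plan is to follow the same strategy as in the proof of Theorem~\ref{obstacle Iz theorem}, starting from the factorization \eqref{cavity NO factorization} of the near field operator. A pleasant feature of the cavity setting is that this factorization is \emph{symmetric}, with no conjugation operator $\mathrm{R}$ appearing, so the argument is in fact cleaner than in the obstacle case. First I would substitute $\mathrm{N}_{\partial C}=-\mathrm{S}^*\mathrm{T}_{\partial D}\mathrm{S}$ directly into the imaging function \eqref{imaging function cavity} to obtain
\begin{equation*}
-\langle \mathrm{N}_{\partial C}\psi_z,\psi_z\rangle_{\partial C}=\langle \mathrm{S}^*\mathrm{T}_{\partial D}\mathrm{S}\psi_z,\psi_z\rangle_{\partial C}.
\end{equation*}

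The key step is then to use the adjoint relation $\langle \mathrm{S}^*h,g\rangle_{\partial C}=\langle h,\mathrm{S}g\rangle_{\partial D}$ recorded just below \eqref{cavity S* def} to move $\mathrm{S}^*$ across the pairing, converting the $L^2(\partial C)$ inner product into the duality pairing on $\partial D$. With $h=\mathrm{T}_{\partial D}\mathrm{S}\psi_z$ and $g=\psi_z$ this gives
\begin{equation*}
-\langle \mathrm{N}_{\partial C}\psi_z,\psi_z\rangle_{\partial C}=\langle \mathrm{T}_{\partial D}\mathrm{S}\psi_z,\mathrm{S}\psi_z\rangle_{\partial D},
\end{equation*}
and hence, taking absolute values, $I_{cavity}(z)=\big|\langle \mathrm{T}_{\partial D}\mathrm{S}\psi_z,\mathrm{S}\psi_z\rangle_{\partial D}\big|$. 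Note that the same function $g=\mathrm{S}\psi_z\in H^{1/2}(\partial D)$ now appears on both sides of the pairing.

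With the imaging function expressed in this form, I would apply the coercivity estimate of Lemma~\ref{mo coercivity} with $\Gamma=\partial D$ and $g=\mathrm{S}\psi_z$, which immediately yields the lower bound $I_{cavity}(z)\ge c_1\|\mathrm{S}\psi_z\|^2_{H^{1/2}(\partial D)}$ with $c_1=c_0$; the upper bound follows from the boundedness of $\mathrm{T}_{\partial D}:H^{1/2}(\partial D)\to H^{-1/2}(\partial D)$, taking $c_2=\|\mathrm{T}_{\partial D}\|$. Since both constants depend only on the operator $\mathrm{T}_{\partial D}$ and not on the sampling point $z$, this completes the argument.

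The part that warrants the most care is the bookkeeping of the sesquilinear pairings in the first two displays, in particular verifying that the adjoint identity produces exactly $\langle \mathrm{T}_{\partial D}\mathrm{S}\psi_z,\mathrm{S}\psi_z\rangle_{\partial D}$ with no stray conjugate. I do not, however, expect a genuine obstacle here: the difficulty that was essential in the obstacle case, namely needing the real-valuedness of $\mathrm{H}\varphi_z$ (from Theorem~\ref{H varphi theorem}) to cancel the conjugation introduced by the operators $\mathrm{R}$ in the non-symmetric factorization, simply does not arise, because the symmetric factorization \eqref{cavity NO factorization} places the same function $\mathrm{S}\psi_z$ on both sides of $\mathrm{T}_{\partial D}$ automatically. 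In this sense the choice of the pairing $\langle \mathrm{N}_{\partial C}\psi_z,\psi_z\rangle$ (rather than $\langle \mathrm{N}_{\partial C}\psi_z,\overline{\psi_z}\rangle$) in \eqref{imaging function cavity} is precisely what makes Lemma~\ref{mo coercivity} directly applicable.
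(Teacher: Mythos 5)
Your proposal is correct and follows essentially the same route as the paper's own proof: substitute the symmetric factorization $\mathrm{N}_{\partial C}=-\mathrm{S}^*\mathrm{T}_{\partial D}\mathrm{S}$, use the adjoint identity to obtain $I_{cavity}(z)=\big|\langle \mathrm{T}_{\partial D}\mathrm{S}\psi_z,\mathrm{S}\psi_z\rangle_{\partial D}\big|$, and then apply Lemma~\ref{mo coercivity} for the lower bound and the boundedness of $\mathrm{T}_{\partial D}$ for the upper bound. Your closing observation---that the symmetry of the cavity factorization makes the real-valuedness argument needed in the obstacle case unnecessary---is also consistent with the paper, whose proof for the cavity is correspondingly shorter.
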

\begin{proof}
From the factorization of $\mathrm{N}_{\partial C}$ \eqref{cavity NO factorization}, we have that
\begin{equation} \label{cavity Iz proof eqn1}
-\langle   \mathrm{N}_{\partial C} \psi_z, \psi_z \rangle_{\partial C} =\langle   \mathrm{S}^*\mathrm{T}_{\partial D}\mathrm{S} \psi_z, \psi_z \rangle_{\partial C} = \langle  \mathrm{T}_{\partial D} \mathrm{S} \psi_z,\mathrm{S} \psi_z \rangle_{\partial D} .
\end{equation}
Therefore we can apply the coercivity estimate in Lemma \ref{mo coercivity} to derive that
\begin{eqnarray*}
I_{cavity}(z) =  \big|  \langle  \mathrm{T}_{\partial D} \mathrm{S} \psi_z,\mathrm{S} \psi_z \rangle_{\partial D} \big|   \ge c_1 \|  \mathrm{S} \psi_z\|^2_{H^{1/2}(\partial D)}.
\end{eqnarray*}
The upper bound follows from the fact that $ \mathrm{T}_{\partial D}$ is bounded. Note that both the positive constants $c_1$ and $c_2$ are independent of $z$. This completes the proof.
\end{proof}

Theorem \ref{cavity Iz theorem} states that the imaging function $I(z)$ is qualitatively the same as $\|  \mathrm{S} \psi_z\|_{H^{1/2}(\partial D)}$. The next theorem gives the explicit expression of $\mathrm{S} \psi_z (x)$.

\begin{theorem} \label{S psi theorem}
\begin{equation}
\Big( \mathrm{S} \psi_z \Big) (x) =  \sum_{n=-\mathfrak{M}}^{\mathfrak{M}} H_n^{(1)}(k|x|) J_n(k|z|) \cos(n\theta_{xz}), \quad x\in \partial D.
\end{equation}
\end{theorem}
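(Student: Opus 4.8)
The plan is to repeat the computation carried out in Theorem \ref{H varphi theorem}, but paying careful attention to the fact that the geometric roles of $x$ and $y$ are now reversed. Since the measurement surface $\partial C$ lies in the interior of $D$, every evaluation point $x\in\partial D$ satisfies $|x|>|y|=r_i$ for $y\in\partial C$. Consequently the relevant addition theorem for the fundamental solution reads
\begin{equation*}
\phi(x;y)=\frac{i}{4}\sum_{n=-\infty}^\infty H_n^{(1)}(k|x|)\,J_n(k|y|)\cos(n\theta_{xy}),\quad x\in\partial D,\ y\in\partial C,
\end{equation*}
with the Hankel factor now carried by the larger radius $|x|$ and the Bessel factor by the smaller radius $|y|=r_i$. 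This single swap, compared with the obstacle case where $|x|<|y|$, is precisely what produces $H_n^{(1)}(k|x|)$ in the claimed formula rather than the $J_n(k|x|)$ that appeared before.

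First I would insert this expansion together with the definition \eqref{cavity psi def} of $\psi_z$ into $(\mathrm{S}\psi_z)(x)=\int_{\partial C}\phi(x;y)\psi_z(y)\ind s_y$ and rewrite the surface integral over $\partial C$ as $r_i\int_{-\pi}^\pi(\cdots)\ind\theta_y$. At this stage the prefactors are arranged to cancel: the factor $r_i$ from the arclength element $\ind s_y$ cancels the $1/|y|=1/r_i$ in $\psi_z$, and the $i/4$ from the expansion cancels the $4/i$ in $\psi_z$.

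Next I would apply the orthogonality identity \eqref{cos thetaxz orthogonality}, so that only the diagonal terms $n=m$ survive and each contributes $\pi(1+\delta_{0n})\cos(n\theta_{xz})$, which cancels the factor $1/[\pi(1+\delta_{0n})]$ carried by $\psi_z$. The final cancellation is the key design feature of $\psi_z$: its denominator $J_n(k|y|)=J_n(kr_i)$ exactly cancels the Bessel factor $J_n(kr_i)$ produced by the addition theorem. Because $k^2$ is not an eigenvalue of $-\Delta$ in $C$, one has $J_n(kr_i)\neq0$ for every $n$, so this division is legitimate and $\psi_z$ is well defined. What remains after all cancellations is exactly $\sum_{n=-\mathfrak{M}}^{\mathfrak{M}} H_n^{(1)}(k|x|)\,J_n(k|z|)\cos(n\theta_{xz})$, as claimed.

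The only point requiring genuine care, rather than routine bookkeeping, is the direction of the addition-theorem expansion: one must use $|x|>|y|$ uniformly for $x\in\partial D$ and $y\in\partial C$, which holds since $C$ is strictly interior to $D$, and hence attach the Hankel function to $|x|$. Choosing the wrong branch would place the singular factor on the wrong variable and yield an incorrect formula. Everything else reduces to the orthogonality relation and the termwise cancellation of prefactors; and since the sum over $n$ is finite, there is no convergence issue to address and the interchange of summation and integration is immediate.
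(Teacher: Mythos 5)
Your proposal is correct and takes essentially the same route as the paper's proof: expand $\phi(x;y)$ via the addition theorem with the Hankel factor attached to $|x|$ (valid since $|x|>|y|=r_i$ for $x\in\partial D$, $y\in\partial C$), substitute the definition \eqref{cavity psi def}, apply the orthogonality identity \eqref{cos thetaxz orthogonality}, and let the prefactors and the $J_n(kr_i)$ denominators cancel termwise. Your explicit justifications (the direction of the expansion, $J_n(kr_i)\neq 0$ from the eigenvalue assumption on $C$, and the finiteness of the sum permitting interchange of sum and integral) merely spell out what the paper leaves implicit.
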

\begin{proof}
From \cite{CaCo,CK},
\begin{equation*}
\phi(x;y) = \frac{i}{4} \sum_{n=-\infty}^\infty H_n^{(1)}(k|x|) J_n(k|y|) \cos(n\theta_{xy}), \quad x\in\partial D, y\in \partial C.
\end{equation*}
This together with \eqref{cavity psi def} yields that
\begin{eqnarray*}
\Big( \mathrm{S} \psi_z \Big) (x) &=&  \int_{\partial C} \phi(x;y) \psi_z(y) \ind s_y \\
&=& r_i \int_{-\pi}^\pi \Big(\sum_{n=-\infty}^\infty H_n^{(1)}(k|x|) J_n(kr_i) \cos(n\theta_{xy})  \Big) \\
&&\qquad \times \Big(\sum_{n=-\mathfrak{M}}^{\mathfrak{M}} \frac{1}{r_i\pi(1+\delta_{0m})}\frac{J_n(k|z|)}{J_n(kr_i)} \cos(m\theta_{yz}) \Big) \ind \theta_y \\
&=&  \sum_{n=-\mathfrak{M}}^{\mathfrak{M}} H_n^{(1)}(k|x|)   J_n(k|z|) \cos(n\theta_{xz}),  \end{eqnarray*}
where we have applied the orthogonality property  \eqref{cos thetaxz orthogonality}.
This completes the proof.
\end{proof}

When {\footnotesize $\mathfrak{M}$} is large, it can be seen from the series expansion of Hankel and Bessel function that
\begin{equation}
\Big( \mathrm{S} \psi_z \Big) (x) \approx
H_0^{(1)}(k|x-z|), \quad |x|>|z|,
\quad x\in \partial D.
\end{equation}
We find that, as the sampling point $z$ approaches $x \in \partial D$ from the interior of the cavity $D$, $\big(\mathrm{S}\psi_z\big)(x)$ peaks  when $z$ coincides with $x$.
Therefore from Theorem \ref{cavity Iz theorem} and Theorem \ref{S psi theorem}, we may conclude that  the imaging function $I_{cavity}(z)$ peaks  when  the sampling point $z$ approaches the boundary $\partial D$ from the interior of the cavity $D$.

\begin{remark}\label{Mchoise}
We have seen a difference between the obstacle case and the cavity case. In the obstacle case, we have considered the limit case \eqref{obstacle varphi infty def} by letting $M \to \infty$ in \eqref{obstacle varphi def}. This is not feasible in the cavity case since we have the following asymptotic for $|y|=r_i$
\begin{equation} \label{J over J asymptotic}
\frac{J_n(k|z|)}{J_n(k|y|)}\sim \left(\frac{|z|}{|r_i|}\right)^n, \quad n \gg 1
\end{equation}
which does not allow us to take the limit {\footnotesize $\mathfrak{M}$} $\to \infty$ in the definition of $\psi_z$ \eqref{cavity psi def} for all sampling points $z$. It is the same reason that would limit us to consider a {\footnotesize $\mathfrak{M}$} that is not too large in the numerical examples, since otherwise  $\psi_z$ \eqref{cavity psi def} would become too large which may lead to numerical instability. Since the radius $r_i$ of $C$ appears in the denominator of of the asymptotic \eqref{J over J asymptotic}, smaller $r_i$ may lead to numerical instability for noisy measurements. We further illustrate these in the numerical examples section.
\end{remark}

We end this section by summarizing the following imaging algorithm.

\vspace{0.5\baselineskip}

\noindent\textbf{Imaging Algorithm for the cavity.}
\
\begin{itemize}
\item Collect the  near field measurements $u^s(x;y), \,x,y \in \partial C$.
\item Select a sampling region in $\mathbb{R}^2$ with a fine mesh containing $D$.
\item Compute the imaging function $I_{cavity}(z)$ in \eqref{imaging function cavity} with $\psi_z$ given by \eqref{cavity psi def}  for all sampling points.
\item Plot the imaging function $I_{cavity}(z)$ over the sampling region  to determine $D$.
\end{itemize}


\section{Imaging with limited-aperture data} \label{section limited-aperture}
In this section we consider imaging with the following limited-aperture ``backscattering'' data
\begin{equation}\label{def limited data}
\{u^s(x;y): \, x,y\in \Gamma_0 \}
\end{equation}
where $\Gamma_0:=\{r(\cos \alpha,\sin\alpha): \theta \in [-\alpha,\alpha]\}$ is a subset of the measurement circle $\Gamma$ (which is either $\partial C$ or $\partial B$), here $0<\alpha\le\pi$ ($\alpha=\pi$ gives full-aperture data) and we denote the radius of $\Gamma$ by $r$ in this section without the danger of confusion.  In the following, we first apply a data completion algorithm  to recover the full-aperture data in appropriate sense and then apply the modified sampling methods in the previous sections. The idea of the data completion algorithm is to represent the full-aperture data in the form of  Fourier series, and to relate the corresponding Fourier coefficients to the limited-aperture data via a prolate matrix.  This is in a similar fashion to the far-field case \cite{DLMZ2021} which considers two data completion algorithms: one based on Fourier series and the other one based on boundary integral equation. For completeness and to shed light on the near field case, we give the details on the data completion algorithm based on Fourier series in a modified way. We refer to \cite{DLMZ2021} for more related data completion algorithms.

\begin{notation}
In this section we are going to work with matrices and vectors with negative indexes for notational convenience. For $-N_1\le n\le N_2$, we denote by
$$
b:=\Big(b_n\Big), \quad -N_1\le n\le N_2
$$
as a $N_1+N_2+1$ dimensional vector.

For $-M_1\le m\le M_2$ and $-N_1\le n\le N_2$, we denote by
$$
A:=\Big(A_{mn}\Big), \quad -M_1\le m\le M_2, -N_1\le n\le N_2
$$
as a $(M_1+M_2+1) \times (N_1+N_2+1)$ dimensional matrix.

\end{notation}

\vspace{1\baselineskip}

\noindent\textit{Limited-aperture data using Fourier basis}: To begin with,  we introduce polar coordinates $x=r \cos \theta_x$  with $\theta_x \in (-\pi,\pi)$. At a fixed $y \in \Gamma_0$, for the limited-aperture backscattering far field measurements $u^{s}(x;y)$ with
$\theta_x \in [-\alpha, \alpha]$, we introduce the the infinite dimensional vector $C^\alpha_\infty$ with $p$-th entry given by $c^\alpha_{p}$
\begin{equation} \label{def C alpha}
c_{p}^{\alpha}:=\int_{-\alpha}^\alpha  u^{s}(x;y) \overline{\phi_p(\theta_x)  } \ind s_{\theta_x} ,\qquad p = 0, \pm 1,\cdots,
\end{equation}
here the Fourier basis is given by $\phi_p(\theta) = \frac{1}{\sqrt{2\pi}}e^{ip\theta}, \quad p=0, \pm1, \pm2,\cdots$.

\vspace{1\baselineskip}

\noindent\textit{Full-aperture data using Fourier basis}: At a fixed $y \in \Gamma_0$, for the full-aperture backscattering far field measurements $u^s(x,y)$ where $x \in \Gamma$, we introduce  the infinite dimensional vector $C_\infty$ with $p$-th entry given by $c_{p}$
\begin{equation} \label{def C}
c_{p} :=\int_{-\pi}^{\pi}  u^{s}(x;y)\overline{\phi_p(\theta_x)  }\ind s_{\theta_x} , \qquad p = 0, \pm 1,\cdots.
\end{equation}
The full-aperture backscattering far field measurements in the  Fourier basis correspond to the infinite dimensional matrix $C_\infty$.

Furthermore, given the knowledge of $C_\infty$, we can write down $u^{s}$ at a fixed $y \in \Gamma_0$ in  Fourier series as
\begin{equation}  \label{C to far field infinite}
u^s(x;y) = \sum_{m=-\infty}^{\infty}  c_{m}\phi_m(\theta_x), \quad \theta_x \in (-\pi,\pi),
\end{equation}
and approximate $u^s$ using a truncated Fourier series as
\begin{equation} \label{C to far field finite}
u^s(x;y) \approx \sum_{m=-J}^{J} c_{m}\phi_m(\theta_x)  , \quad \theta_x   \in (-\pi,\pi),
\end{equation}
for $J$ large enough so that the approximation error is sufficiently small in the $L^2$ sense.

\vspace{1\baselineskip}

\noindent\textit{Relation between limited-aperture data and full-aperture data}:
We first derive a relation between the limited-aperture data and full-aperture data as follows.
\begin{lemma} \label{thm far field case partial to full infinite}
Let $C^\alpha_\infty$ and $C_\infty$ be given by \eqref{def C alpha} and \eqref{def C} respectively. It holds that
\begin{equation} \label{far field case partial to full infinite}
C^{\alpha}_\infty = \mathbb{P}C_\infty,
\end{equation}
where the infinite dimensional matrix $\mathbb{P}$ is the prolate matrix (with dimension infinity)  given by \eqref{def Prolate Matrix} whose $mn$-th entry is given by
\begin{equation} \label{def Prolate Matrix}
\mathbbm{p}_{mn}
:= \int_{-\alpha}^\alpha \phi_m(\theta) \overline{\phi_n(\theta)} \ind \theta
= \frac{1}{2\pi}\int_{-\alpha}^\alpha e^{i(m-n)\theta} \ind \theta
=
\bigg\{
\begin{array}{cc}
\frac{\alpha}{\pi},  &  m=n    \\
\frac{\sin((m-n)\alpha)}{\pi (m-n)}, &     m\not=n
\end{array}
.
\end{equation}

\end{lemma}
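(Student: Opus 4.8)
The plan is to establish the identity \eqref{far field case partial to full infinite} entrywise, by feeding the full-aperture Fourier expansion of $u^s$ into the definition of the limited-aperture coefficients. I would fix $y \in \Gamma_0$ throughout and recall from \eqref{C to far field infinite} that, under the conventions making \eqref{def C} the genuine Fourier coefficients of $u^s(\cdot;y)$ against the orthonormal system $\{\phi_m\}$, one has the expansion $u^s(x;y) = \sum_{m=-\infty}^{\infty} c_m \phi_m(\theta_x)$ for $\theta_x \in (-\pi,\pi)$, where each $c_m$ is exactly the $m$-th entry of $C_\infty$. Starting from the definition \eqref{def C alpha} of the $p$-th entry $c_p^\alpha$ of $C^\alpha_\infty$ and inserting this expansion, I obtain $c_p^\alpha = \int_{-\alpha}^{\alpha} \bigl( \sum_{m=-\infty}^{\infty} c_m \phi_m(\theta_x) \bigr) \overline{\phi_p(\theta_x)} \ind s_{\theta_x}$, with the integration measure understood consistently with \eqref{def C} so that it matches the $\ind\theta$ appearing in \eqref{def Prolate Matrix}.

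Next I would interchange the summation and integration to write $c_p^\alpha = \sum_{m=-\infty}^{\infty} c_m \int_{-\alpha}^{\alpha} \phi_m(\theta_x) \overline{\phi_p(\theta_x)} \ind s_{\theta_x}$, and then recognize each of these integrals as a prolate-matrix entry: by the very definition \eqref{def Prolate Matrix}, $\int_{-\alpha}^{\alpha} \phi_m \overline{\phi_p} \ind \theta = \mathbbm{p}_{mp}$. Because the explicit evaluation recorded in \eqref{def Prolate Matrix} depends only on $m-p$ through the quantity $\sin((m-p)\alpha)/(\pi(m-p))$, which is real and even, the prolate matrix is real and symmetric, so $\mathbbm{p}_{mp} = \mathbbm{p}_{pm}$. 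Combining these observations gives $c_p^\alpha = \sum_{m=-\infty}^{\infty} \mathbbm{p}_{pm} c_m = (\mathbb{P} C_\infty)_p$, which is precisely the $p$-th component of \eqref{far field case partial to full infinite}; since $p$ is arbitrary, the claimed matrix identity follows.

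The one delicate step I anticipate is rigorously justifying the interchange of the infinite sum and the integral. Since $u^s(\cdot;y)$ restricted to the measurement circle is square-integrable, its Fourier series converges in $L^2(-\pi,\pi)$ and hence in $L^2(-\alpha,\alpha)$; as $\overline{\phi_p}$ is bounded and therefore lies in $L^2(-\alpha,\alpha)$, pairing against $\overline{\phi_p}$ over $(-\alpha,\alpha)$ is a continuous linear functional, so it commutes with the $L^2$-limit of the truncated sums $\sum_{|m|\le J} c_m \phi_m$. This legitimizes the termwise integration and is really the only analytic ingredient beyond the explicit computation already contained in \eqref{def Prolate Matrix}; everything else is bookkeeping of Fourier coefficients, so I expect no further obstruction.
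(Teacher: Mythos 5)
Your proposal is correct and follows essentially the same route as the paper's proof: plug the full-aperture Fourier expansion \eqref{C to far field infinite} into the definition \eqref{def C alpha}, identify the resulting integrals as the prolate-matrix entries $\mathbbm{p}_{mp}$, and use the symmetry $\mathbbm{p}_{mp}=\mathbbm{p}_{pm}$ to obtain $C^\alpha_\infty=\mathbb{P}C_\infty$. Your explicit $L^2$-justification of the sum--integral interchange is a detail the paper leaves implicit, but it does not constitute a different argument.
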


\begin{proof}
Assume that the full-aperture measurements are given, then there is the Fourier series expansion \eqref{C to far field infinite}. Plugging this expression into the definition $\eqref{def C alpha}$  yields
$$
c^\alpha_{p} = \mathbbm{p}_{mp} c_{m}.
$$
Note that $\mathbb{P}$ is symmetric, this proves \eqref{far field case partial to full infinite} and completes the proof.
\end{proof}

%
%
%
%

\vspace{1\baselineskip}

\noindent\textit{Finite dimensional case}:
In practice, the measurements are discrete data. This motivates us to consider a finite dimensional space consisting of   $\phi_m(\theta) = \frac{1}{\sqrt{2\pi}}e^{im\theta}, \, m=0, \pm1, \cdots,\pm J$ for a sufficiently large $J$.  

The following theorem in the finite dimensional case follows immediately from Lemma \ref{thm far field case partial to full infinite}.
\begin{theorem} \label{thm far field case partial to full finite}
Let $C^\alpha:=\Big(c^\alpha_{p}\Big)_{-J \le p\le J}$ and $C:=\Big(c_{p}\Big)_{-J \le p\le J}$ with $c^\alpha_{p}$ and $c_{p}$ given by \eqref{def C alpha} and \eqref{def C} respectively. It holds that
\begin{equation} \label{far field case partial to full finite}
C^{\alpha} = \mathbb{P} C,
\end{equation}
where $\mathbb{P} $ is the $(2J+1)\times (2J+1)$ prolate matrix given by \eqref{def Prolate Matrix}.
\end{theorem}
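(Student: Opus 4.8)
The plan is to reproduce the argument of Lemma \ref{thm far field case partial to full infinite} verbatim, but with every sum now restricted to the finite index range $-J \le m \le J$, so that each manipulation is a finite (hence unconditionally valid) one. Throughout I would work inside the finite-dimensional approximation space spanned by $\{\phi_m\}_{-J \le m \le J}$, in which the truncated Fourier representation \eqref{C to far field finite} of $u^s(\cdot;y)$ at a fixed $y \in \Gamma_0$ is treated as exact; this is precisely the discrete setting arranged just before the statement.

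First I would fix an index $-J \le p \le J$ and substitute the finite Fourier series $u^s(x;y) = \sum_{m=-J}^{J} c_m \phi_m(\theta_x)$ into the definition \eqref{def C alpha} of the limited-aperture coefficient $c^\alpha_p$. Since the sum is finite, interchanging it with the integral over $[-\alpha,\alpha]$ requires no justification, and the integral of each $\phi_m \overline{\phi_p}$ is, by definition \eqref{def Prolate Matrix}, exactly the prolate entry $\mathbbm{p}_{mp}$:
\begin{equation*}
c^\alpha_p = \sum_{m=-J}^{J} c_m \int_{-\alpha}^\alpha \phi_m(\theta_x)\,\overline{\phi_p(\theta_x)} \ind s_{\theta_x} = \sum_{m=-J}^{J} \mathbbm{p}_{mp}\, c_m .
\end{equation*}

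The only remaining point is to match indices with the matrix product $\mathbb{P}C$. Here I would invoke the symmetry $\mathbbm{p}_{mp} = \mathbbm{p}_{pm}$, which is immediate from \eqref{def Prolate Matrix}: each entry depends only on $m-p$, and both $\sin((m-p)\alpha)$ and $m-p$ are odd in $m-p$, so their ratio is even. This rewrites the display as $c^\alpha_p = \sum_{m=-J}^{J} \mathbbm{p}_{pm}\, c_m = (\mathbb{P}C)_p$ for every $-J \le p \le J$, which is exactly \eqref{far field case partial to full finite}.

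I do not expect a genuine obstacle, since the content is already contained in Lemma \ref{thm far field case partial to full infinite} and the theorem is merely its finite truncation. The one conceptual subtlety worth flagging is the consistency of that truncation: the infinite relation sums over all $m \in \Z$, whereas the finite statement sums only over $|m| \le J$. This is harmless in the finite-dimensional model, where $c_m = 0$ for $|m| > J$ by construction; equivalently, if one retains the exact data, the neglected tail is controlled by choosing $J$ large enough that \eqref{C to far field finite} approximates \eqref{C to far field infinite} to within the prescribed $L^2$ tolerance, as noted before the statement.
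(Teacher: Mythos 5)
Your proposal is correct and matches the paper's approach: the paper simply states that the finite-dimensional theorem ``follows immediately'' from Lemma \ref{thm far field case partial to full infinite}, and your argument is exactly that truncation carried out explicitly --- substituting the truncated series \eqref{C to far field finite} into \eqref{def C alpha}, identifying the prolate entries \eqref{def Prolate Matrix}, and using the symmetry $\mathbbm{p}_{mp}=\mathbbm{p}_{pm}$. Your explicit remark that the relation is exact only in the finite-dimensional model where $c_m=0$ for $|m|>J$ (otherwise holding up to the controlled tail of \eqref{C to far field infinite}) is a point the paper leaves implicit, and is a welcome clarification rather than a deviation.
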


\vspace{1\baselineskip}

\noindent\textit{From limited-aperture data to full-aperture data}:
Now it is clear that the limited-aperture data is related to the full-aperture data via  \eqref{far field case partial to full finite}. Our goal is then to find $C$ or its approximation from $C^\alpha $ via \eqref{far field case partial to full finite} at each fixed $y \in \Gamma_0$. From the properties of the prolate matrix in \cite[Lemma 3.2]{DLMZ2021}, we have that the eigenvalues of $\mathbb{P}$ are all positive, but they are clustered near $1$ and $0$, and hence the matrix $\mathbb{P}$  is severely ill-conditioned. In fact the eigenvalues decay exponentially to $0$ when $J$ becomes large. We refer to \cite{Slepian78,Varah1993} for more details on the prolate matrix. Therefore we can only hope to invert $\mathbb{P}$ using regularization techniques in order to find $C$ from $C^\alpha$. We shall apply
the following Regularization \eqref{data completion Tikhonov 1}
to find approximate inverse of $\mathbb{P}$.

\textbf{Regularization}: Similar to \cite{DLMZ2021}, our choice is to consider the regularization such that
\begin{equation} \label{data completion Tikhonov 1}
\mathbb{P}^\dagger = \mathbb{U} \left(\frac{1}{ \sigma_j + \epsilon}\right) \mathbb{U}^*,
\end{equation}
where $\epsilon>0$ is a regularization parameter, and
$$
\mathbb{P} = \mathbb{U} \Sigma \mathbb{U}^*, \qquad \Sigma=\mbox{diag}(\sigma_{-J},\sigma_{-J+1}, \cdots, \sigma_{J}).
$$
In this case, we take $\mathbb{P}^\dagger$ to approximate $\mathbb{P}^{-1}$. From the point of view of Slepian's spheroidal wave functions \cite{Slepian78,Varah1993}, this method attempts to use some information of the spheroidal wave functions with ``small energy'' on the interval $[-\alpha,\alpha]$.
Now we are ready to summarize the imaging algorithm with limited-aperture data.

\medskip

\noindent\textbf{Imaging Algorithm.}
\
\begin{itemize}
\item Data completion for $\{u^s(x;y): \, x\in \Gamma,\, y\in\Gamma_0\}$:  At each fixed $y\in\Gamma_0$, recover the full-aperture data $\{u^s(x;y): \, x\in \Gamma,\, y\in\Gamma_0\}$ approximately  using the limited-aperture data $\{u^s(x;y): \, x\in \Gamma_0, \,y\in \Gamma_0\}$.
\begin{itemize}
\item Step I: Compute $C^{\alpha}=\Big(c_{p}^{\alpha}\Big)$ from the measurements $\{u^s(x;y): \, x\in \Gamma_0\}$ by \eqref{def C alpha}.
\item Step II: Approximate $C$ by $ \mathbb{P}^{\dagger} C^\alpha $, where $\mathbb{P}^\dagger$ is the approximate inverse of $\mathbb{P}$ using
Regularization \eqref{data completion Tikhonov 1}.
\item Step III: Recover the full-aperture data by \eqref{C to far field finite}.
\end{itemize}
\item At each fixed $x\in\Gamma$, repeat the above Data Completion to recover the full-aperture data $\{u^s(x;y): \, y\in \Gamma,\, x\in\Gamma\}$ approximately  using the recoved data $\{u^s(x;y): \, y\in \Gamma_0, \,x\in \Gamma\}$ in the above step (note the reciprocity relation $u^s(x;y)=u^s(y;x)$ \cite{CK}).
\item Modified sampling method: Reconstruct the object by the imaging function  \eqref{imaging function obstacle} for the obstacle case and \eqref{imaging function cavity} for the cavity case.
\end{itemize}

\section{Numerical Examples} \label{section numerical examples}

In this section, we present some numerical examples to illustrate the performance of the modified sampling methods proposed in the previous sections.
The numerical examples are divided into three groups.
\begin{itemize}
  \item Reconstructions of obstacles using the imaging function \eqref{imaging function obstacle};
  \item Reconstructions of cavities using the imaging function  \eqref{imaging function cavity};
  \item Reconstructions of obstacles with limited-aperture data \eqref{def limited data}.
\end{itemize}
The boundaries of the scatterers used in our numerical experiments are parameterized as follows:
\be
\label{circle}&\mbox{\rm Circle:}&\quad x(t)\ =(a,b)+\, r(\cos t, \sin t),\quad 0\leq t\leq2\pi,\\
\label{ellipse}&\mbox{\rm Ellipse:}&\quad x(t)\ =(a,b)+\, (2\cos t, 3\sin t),\quad 0\leq t\leq2\pi,\\
\label{roundsquare}&\mbox{\rm Round Square:}&\quad x(t)\ =(a,b)+(1.5\cos^{3}t +1.5\cos t, 1.5\sin^{3}t +1.5\sin t),\quad 0\leq t\leq2\pi,\\
\label{peanut}&\mbox{\rm Peanut:}&\quad x(t)\ =(a,b)+\, 1.5\sqrt{3\cos^2 t+1}(\cos t, \sin t),\quad 0\leq t\leq2\pi,\\
\label{kite}&\mbox{\rm Kite:}&\quad x(t)\ =(a,b)+\, (1.1\cos t+0.625\cos 2t-0.625, 1.5\sin t),\quad 0\leq t\leq2\pi,
\en
with $(a,b)$ be the location of the scatterer which will be specified in different examples.

In our simulations, the boundary integral equation method is used to compute the
scattered fields $u^{s}(x;y)$ for $L$ equidistantly distributed measurement points  and $L$ equidistantly distributed source points over the measurement surface. These data are then stored in the matrices $\mathbb{N} \in \C^{L \times L}$.
We further perturb $\mathbb{N}$ by random noise using
\ben
\mathbb{N}^{\delta}\ =\ \mathbb{N} +\delta\|\mathbb{N}\|\frac{R_1+R_2 i}{\|R_1+R_2 i\|},
\enn
where $R_1$ and $R_2$ are two $L \times L$ matrices containing pseudo-random values
drawn from a normal distribution with mean zero and standard deviation one. The
value of $\delta$ used in our code is $\delta:=\|\mathbb{N}^{\delta} -\mathbb{N}\|/\|\mathbb{N}\|$ which represents the relative error.

\subsection{Obstacle}
We first  illustrate the performance of the   imaging function $I_{obstacles}$ for obstacles.
In the simulations, we use a grid $\mathcal{G}$ of $301\times 301$ equally spaced sampling points on the rectangle $[-5,5]\times[-5,5]$.
We use $L=128$ equidistant sensors $y_i,i=1,2,\cdots,128$  on the circle $\partial B_5:=\{y\in\R^2:\, |y|=5\}$.
We take $k=10$. Therefore the wavelength $\lambda=2\pi/k\approx 0.618$ and consequently the measurement surface $\pa B_5$ is just two or three wavelengths away from the obstacles.
As suggested in the arguments after Theorem \ref{obstacle Iz theorem}, we take $M=32$ in the definition $\varphi_z$ of \eqref{obstacle varphi def}.
For each sampling point $z \in \mathcal{G}$, we define the indicator function
\be\label{obstacleindicator}
W_{obstacle}(z)\ :=|\Phi_z^{T}\mathbb{N}^{\delta}\Phi_z|,
\en
where $\Phi_z=(\varphi_z(y_1), \varphi_z(y_2),\cdots, \varphi_z(y_{128}))^\top \in \C^{128}$.
Clearly, the indicator function is independent of any a priori information of the unknown obstacles.

We report five examples. In the first three examples, the underlying obstacles are round square, peanut and kite located at $(0,0)$, respectively. In the 
fourth example the obstacle $\Omega$ is given by the union of two disjoint components $\Omega:=\Omega_1\cup\Omega_2$, where $\Omega_1$ is a disk with radius $1$ and center $(-2.5, 0)$ while $\Omega_2$ is a kite located at $(2,0)$. For the fifth example the obstacle $\Omega$ is also given by the union of two disjoint components $\Omega:=\Omega_1\cup\Omega_2$, where $\Omega_1$ is a disk with radius $1$ and center $(0, 2.5)$ while $\Omega_2$ is a peanut located at $(-1.2,0)$.
These five exact obstacles are shown in the first column of Figure \ref{obstacles}.

Reconstructions are shown in the second and third column of Figure \ref{obstacles}. Obviously, the indicator function $W_{obstacle}$ peaks on the boundaries of the obstacles and is capable to reconstruct  the locations and shapes. As shown in the last two examples, the different components are also well reconstructed. 
As shown in the third column of Figure \ref{obstacles}, the reconstructions with $10\%$ noise are almost the same as the those without noises. This further implies that our modified sampling methods are quite stable with respect to noises. These reconstructions are comparable to those using similar imaging function with far-field measurements \cite{LiuIP17}.

\begin{figure}[htbp]
  \centering
\subfigure[]{
    \includegraphics[width=4in]{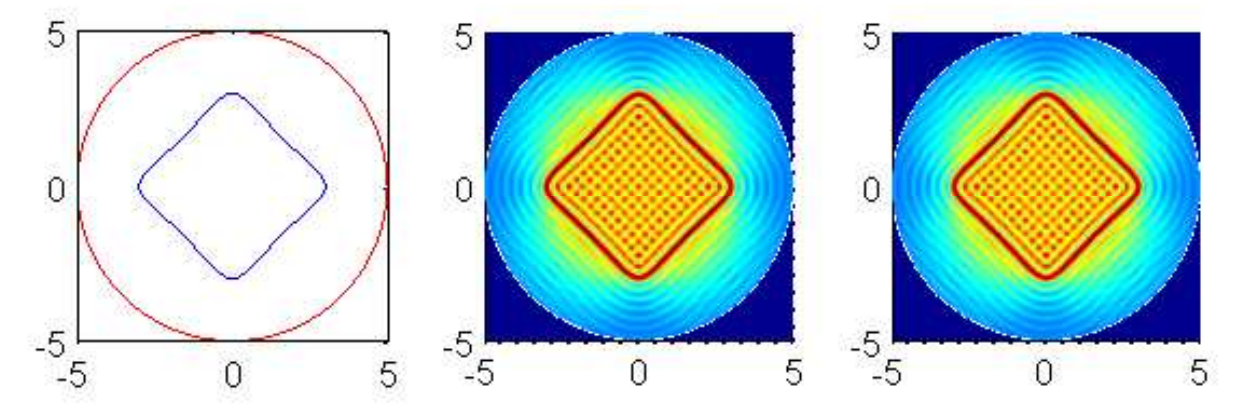}}\\
\subfigure[]{
    \includegraphics[width=4in]{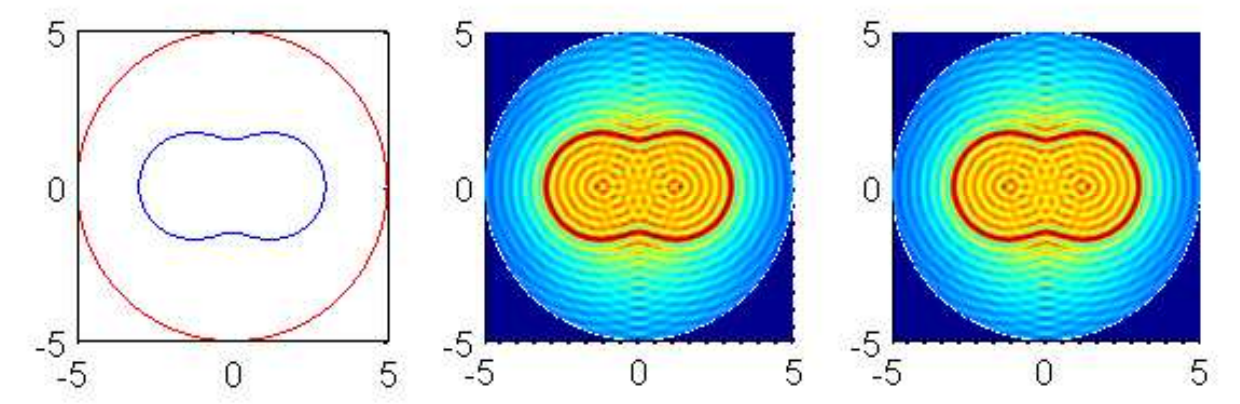}}\\
  \subfigure[]{
    \includegraphics[width=4in]{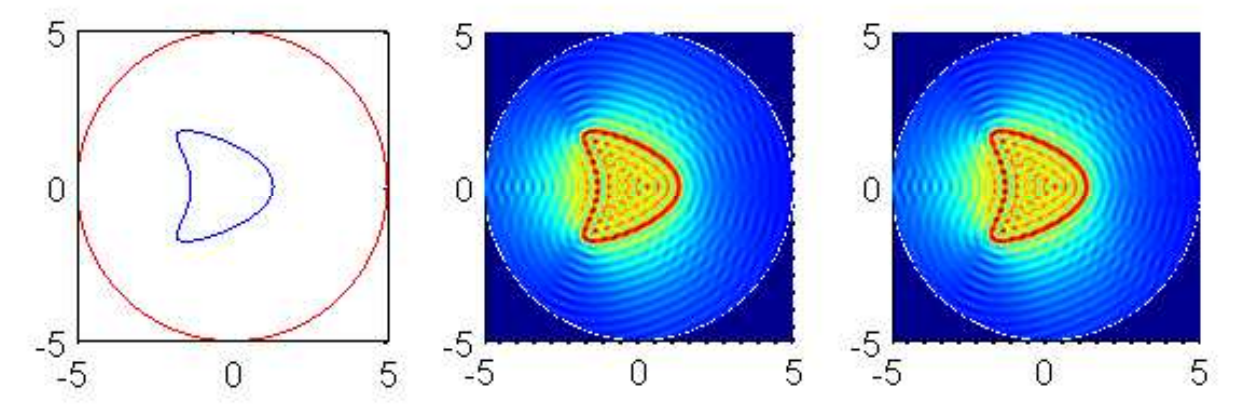}}\\
      \subfigure[]{
    \includegraphics[width=4in]{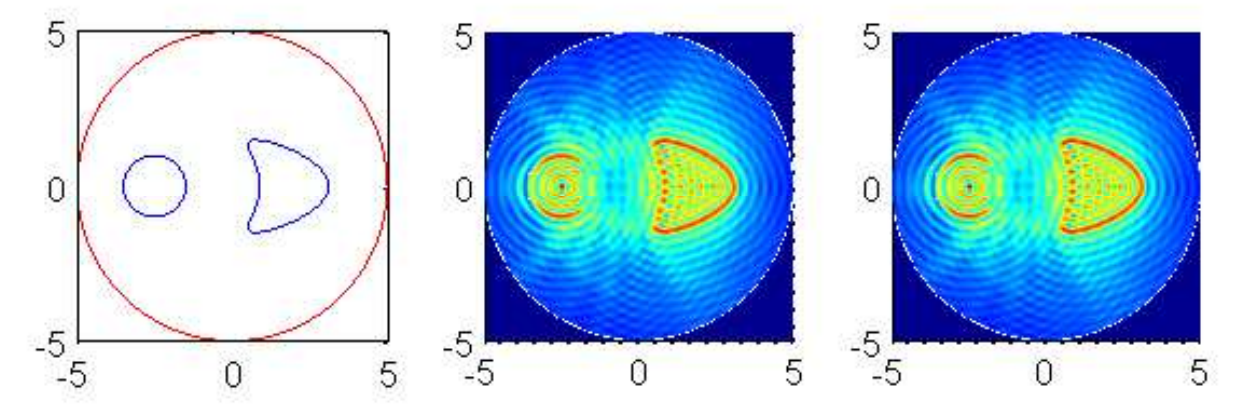}}\\
    \subfigure[]{
    \includegraphics[width=4in]{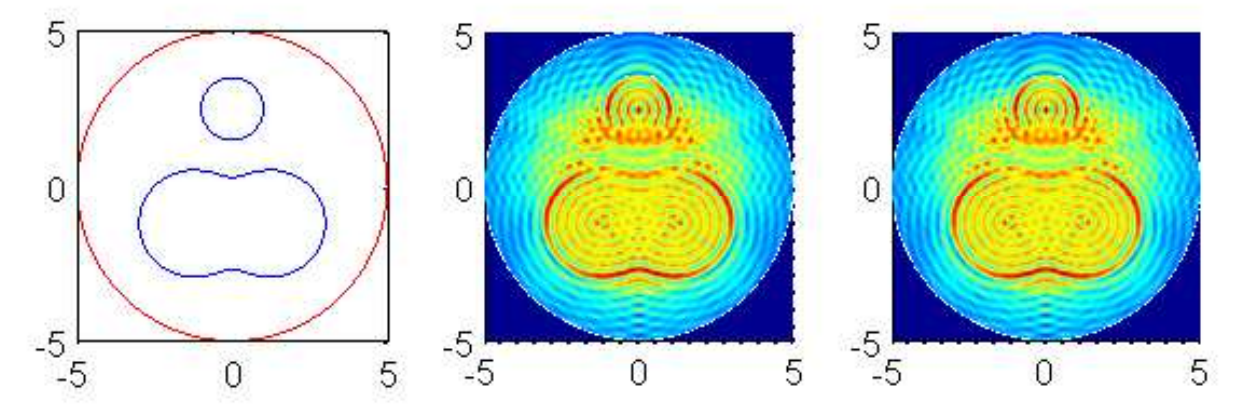}}
\caption{{\bf Reconstructions of obstacles.} The first column is the exact obstacles. The second column shows the reconstructions without noises, while the third column shows the reconstructions with $10\%$ noise.}
\label{obstacles}
\end{figure}

\subsection{Cavity}
This subsection is denoted to  illustrating the performance of the   imaging function $I_{cavity}$ for cavities. 
In the simulations, we used a grid $\mathcal{G}$ of $81\times 81$ equally spaced sampling points on the rectangle $[-4,4]\times[-4,4]$.
As suggested in Remark \ref{Mchoise}, we take $\mathfrak{M}=3$ in the definition $\psi_z$ of \eqref{cavity psi def}.
For each sampling point $z \in \mathcal{G}$, we define the indicator function
\ben
W_{cavity}(z)\ :=|\Psi_z^{\ast}\mathbb{N}^{\delta}\Psi_z|,
\enn
where $\Psi_z=(\psi_z(y_1), \psi_z(y_2),\cdots, \psi_z(y_{128}))^\top \in \C^{L}$ and the superscript $*$ denotes the conjugate transpose.

The cavities we considered are round square, disk, ellipse, peanut and kite with center $(0,0)$. The radius of the disk is $2$.
We refer to the first column of Figure \ref{cavities} for the exact cavities. 
In the cavity case, general reconstructions using low frequency waves seem to perform better than those using high frequency waves \cite{ZM2021}, we therefore  take the wave number $k=0.2$. 

In Figure \ref{cavities}, we show the reconstructions of various cavities in the second column (without noises) and third column (with $1\%$ noise). We find that the indicator function $W_{cavity}$ starts to peak as the sampling point approaches the boundary from the interior of the cavities and this allows us to reconstruct  the shapes.  As shown in the third column of Figure \ref{cavities}, the reconstructions are not that robust to noises compared to the obstacle case.  

As discussed in  Remark \ref{Mchoise}, measurement circle with smaller radius may lead to numerical instability for noisy measurements, we illustrate this in Figures \ref{cavities}-\ref{cavities0005} by considering three measurement circles with radius $r=1$, $r=0.5$ and $r=0.005$, respectively.  Without noise, the proposed imaging functional $W_{cavity}$ gives quite good reconstructions. Surprisingly, as shown in Figure \ref{cavities0005}, even the measurement surface is very small and there are only $8$ sensors, the cavities can be well reconstructed. However, the reconstructions are more sensitive to noises for measurement circle with smaller radius.

\begin{figure}[htbp]
  \centering
  \subfigure[]{
    \includegraphics[width=4in]{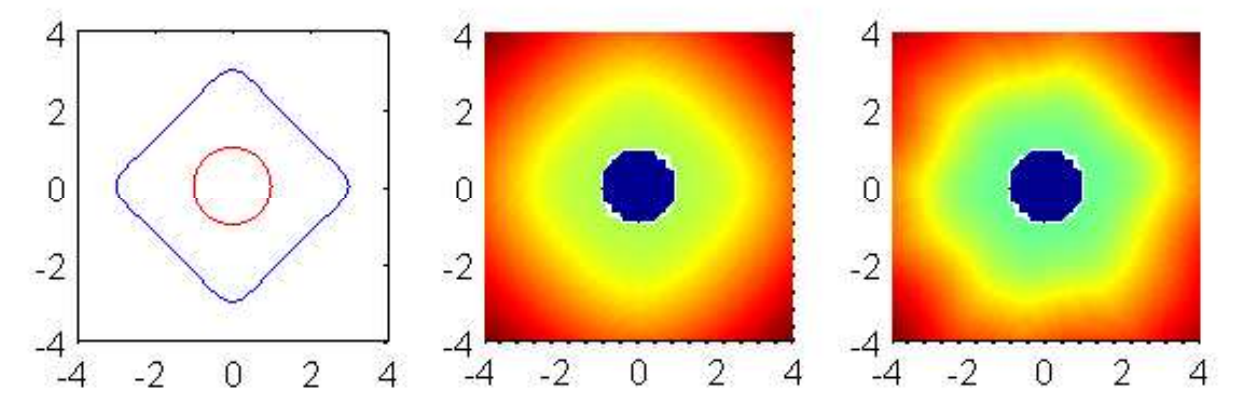}}\\
\subfigure[]{
    \includegraphics[width=4in]{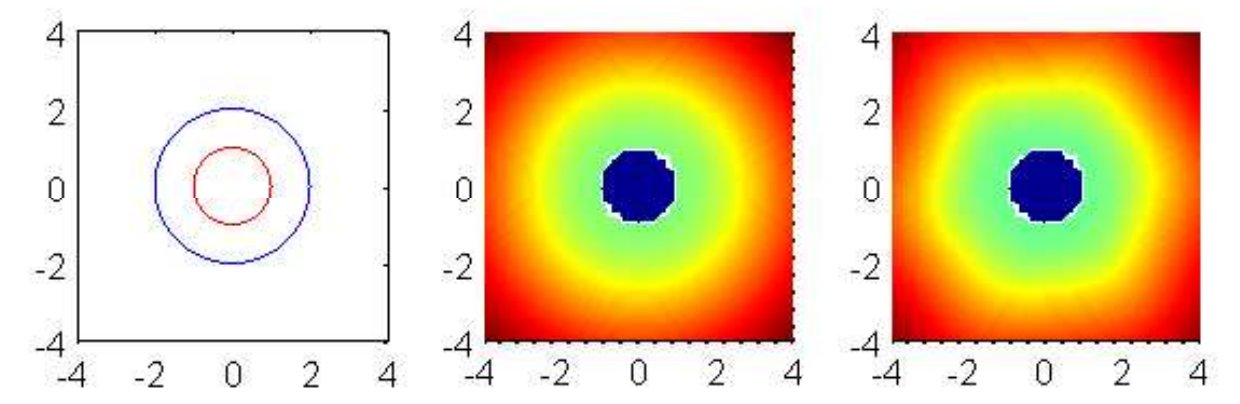}}\\
  \subfigure[]{
    \includegraphics[width=4in]{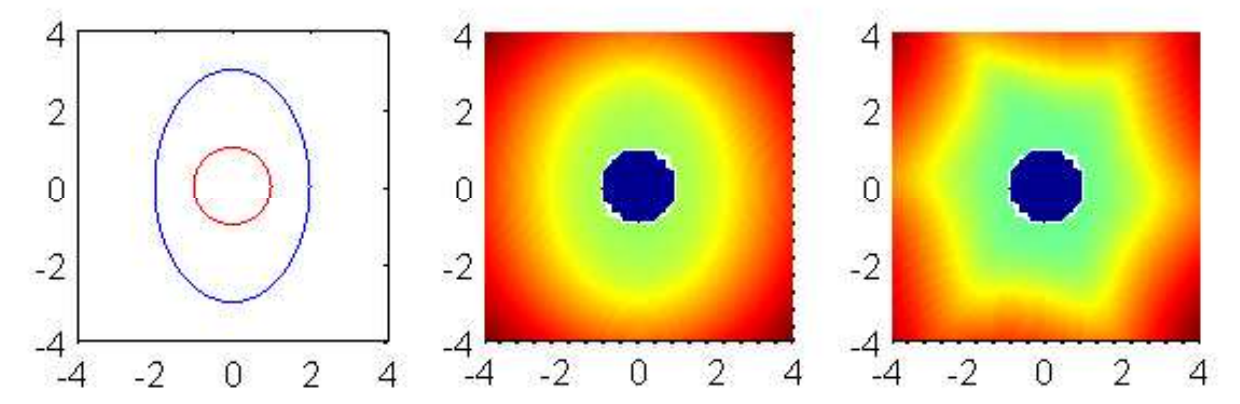}}\\
\subfigure[]{
    \includegraphics[width=4in]{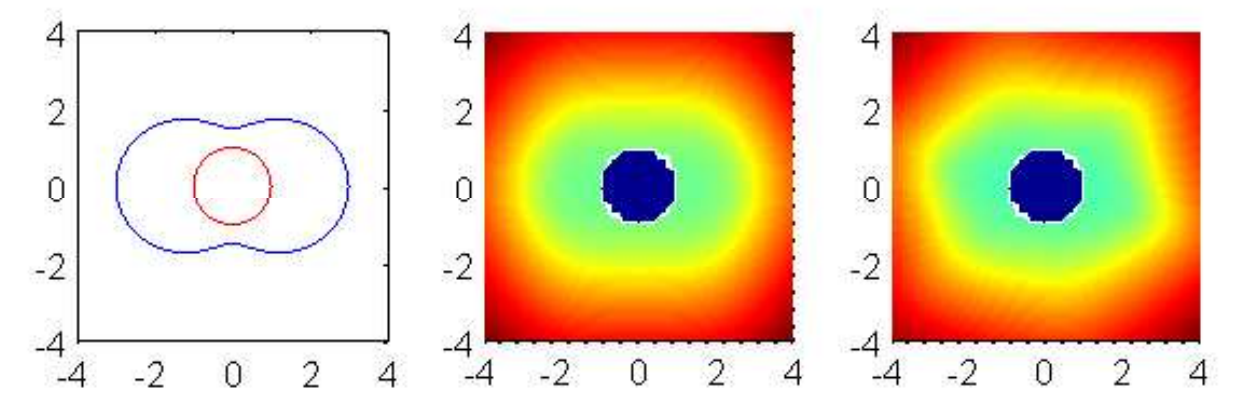}}\\
  \subfigure[]{
    \includegraphics[width=4in]{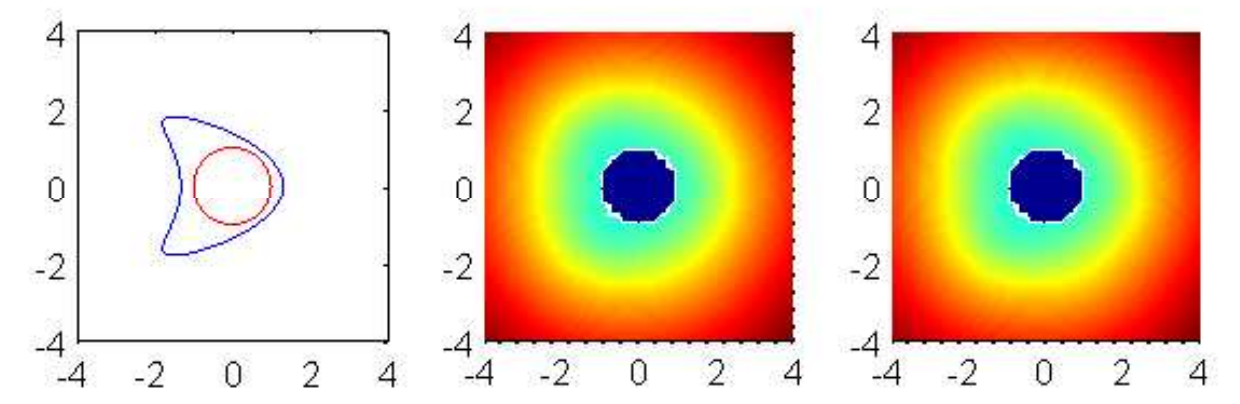}}
\caption{{\bf Reconstructions of cavities.} There are $L=32$ sensors on the measurement circle with radius $1$. The first column is the exact cavities. The second column shows the reconstructions without noises, while the third column shows the reconstructions with $1\%$ noise.}
\label{cavities}
\end{figure}

\begin{figure}[htbp]
  \centering
  \subfigure[]{
    \includegraphics[width=4in]{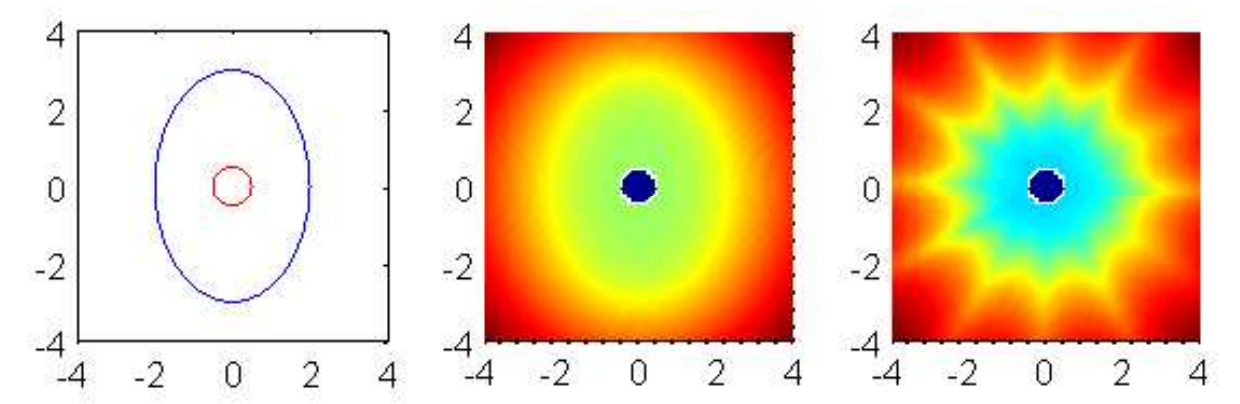}}\\
\subfigure[]{
    \includegraphics[width=4in]{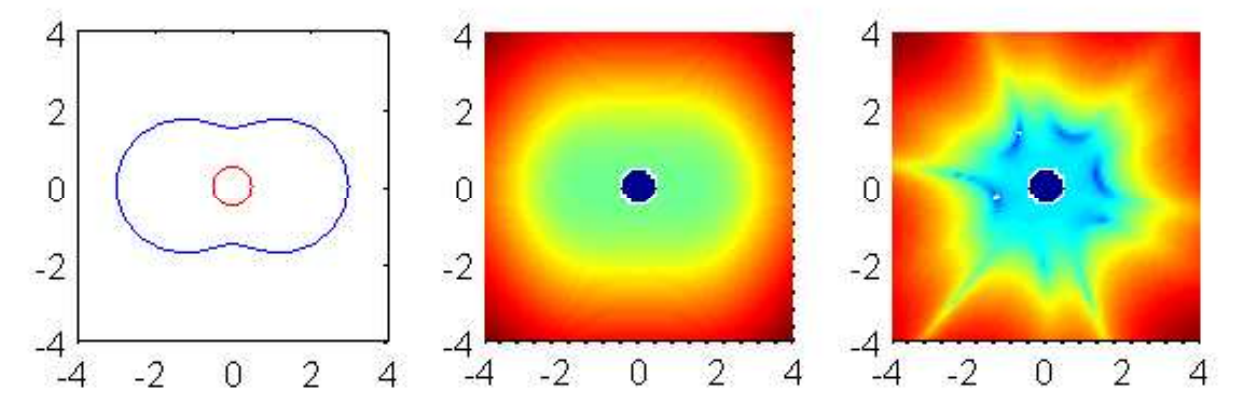}}
\caption{{\bf Reconstructions of cavities.} There are $L=32$ sensors on the measurement circle with radius $0.5$.  The second column shows the reconstructions without noises, while the third column shows the reconstructions with $1\%$ noise.}
\label{cavities05}
\end{figure}

\begin{figure}[htbp]
  \centering

  \subfigure[]{
    \includegraphics[width=4in]{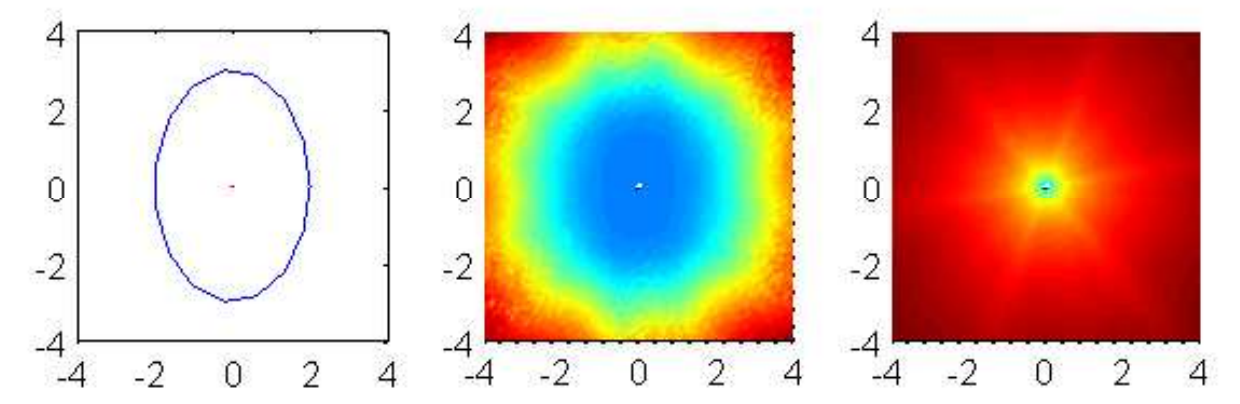}}\\
\subfigure[]{
    \includegraphics[width=4in]{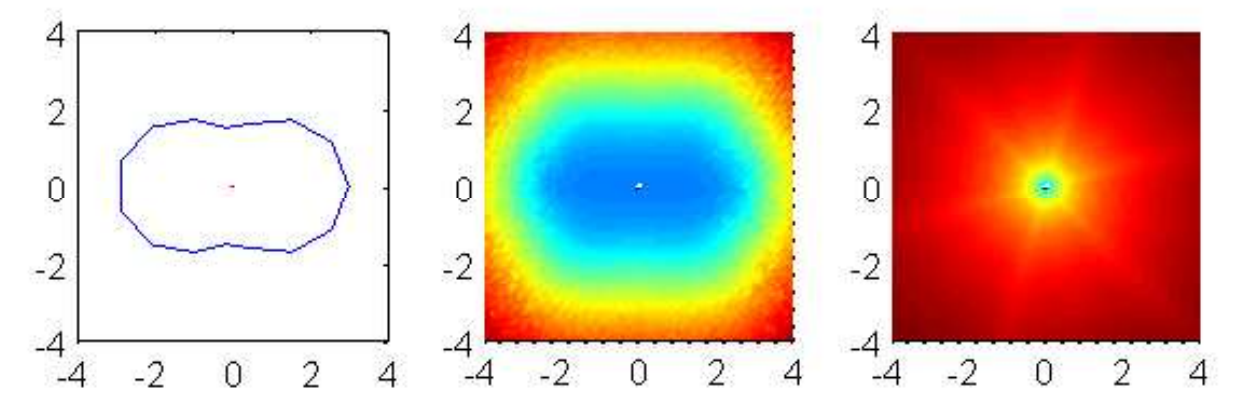}}
\caption{{\bf Reconstructions of cavities.} There are $L=8$ sensors on the measurement circle with radius $0.005$. The second column shows the reconstructions without noises, while the third column shows the reconstructions with $1\%$ noise.}
\label{cavities0005}
\end{figure}

\subsection{Limited-aperture}
In this subsection, we illustrate the performance of our modified sampling methods using limited-aperture data. We consider $64$ sensors (of the total $128$ sensors)  equidistantly located on the upper half circle $\pa B_{+}:=\{(x_1,x_2)\in \pa B_5: \, x_2>0\}$.
These data are then stored in the matrices $\mathbb{N}_{partial} \in \C^{64 \times 64}$.
We consider the following two strategies for the limited-aperture problems.
\begin{itemize}
  \item Reconstruct the obstacles using the limited-aperture data directly with the indicator function
  \ben
W_{limit}(z)\ :=|\widetilde{\Phi}_z^{T}\mathbb{N}_{partial}\widetilde{\Phi}_z|,
\enn
where $\widetilde{\Phi}_z=(\varphi_z(y_1), \varphi_z(y_2),\cdots, \varphi_z(y_{64}))^\top \in \C^{64}$.
  \item Recover the full aperture data using the data completion algorithm in Section \ref{section limited-aperture}, where the Regularization \eqref{data completion Tikhonov 1} is used with parameter $\epsilon = 10^{-3}$. With the recovered full aperture data, we then apply the modified sampling methods to reconstruct the obstacles with the indicator function \eqref{obstacleindicator}.
\end{itemize}

The first two columns of Figure \ref{limitaperture} show the reconstructions using these two strategies, respectively. For comparison, we show in the third column of   Figure \ref{limitaperture} the results using full aperture data. $10\%$ noise is added in all of these reconstructions.
It is observed that the upper half parts of the obstacles can be well reconstructed and the resolution indeed can be improved with the help of the data completion algorithm.

\begin{figure}[htbp]
  \centering
\subfigure[]{
    \includegraphics[width=4in]{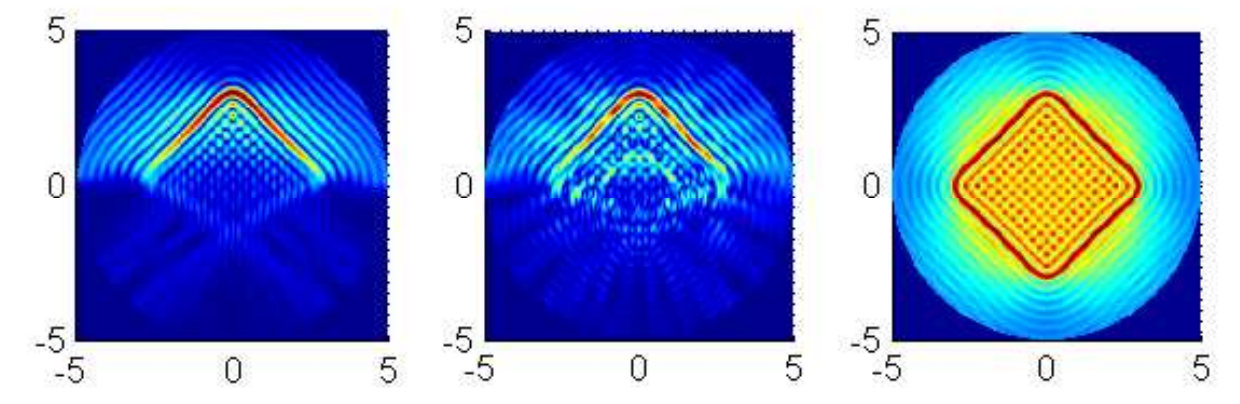}}\\
  \subfigure[]{
    \includegraphics[width=4in]{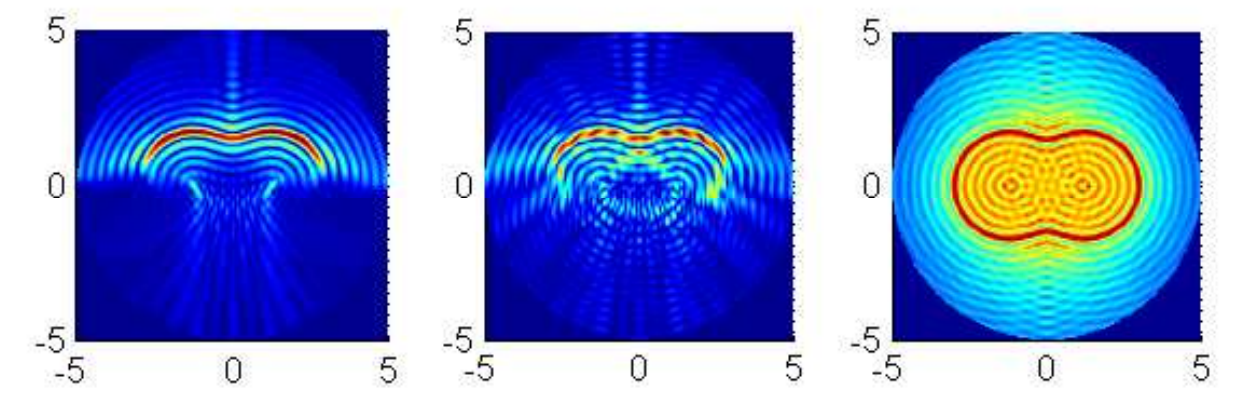}}\\
  \subfigure[]{
    \includegraphics[width=4in]{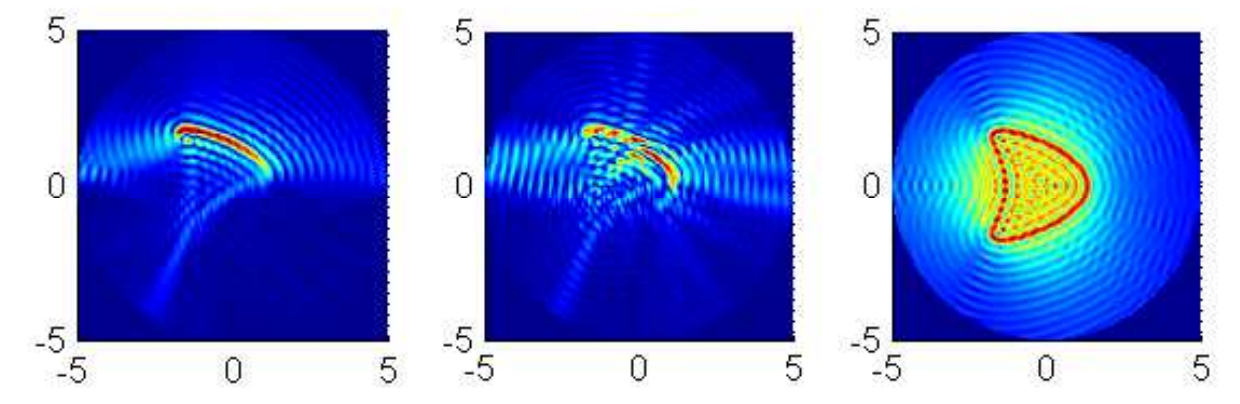}}\\
      \subfigure[]{
    \includegraphics[width=4in]{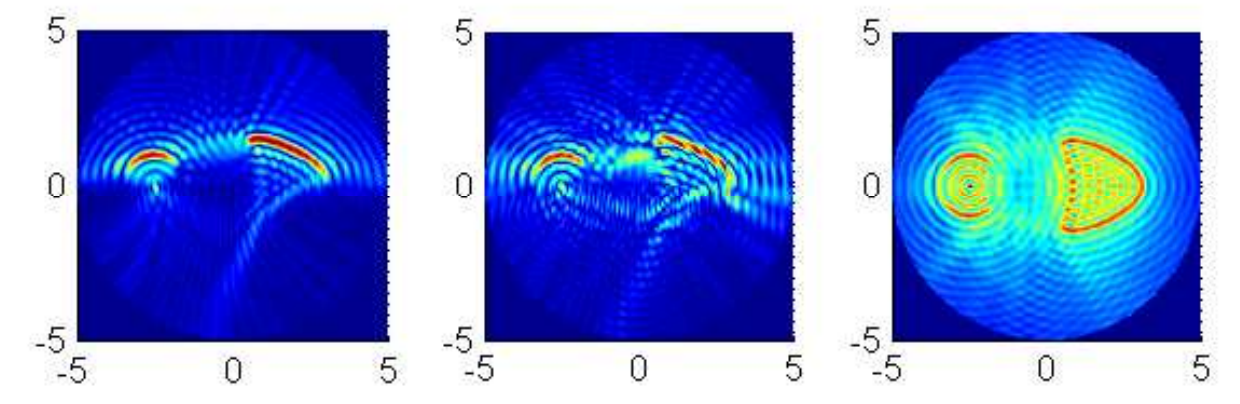}}\\
    \subfigure[]{
    \includegraphics[width=4in]{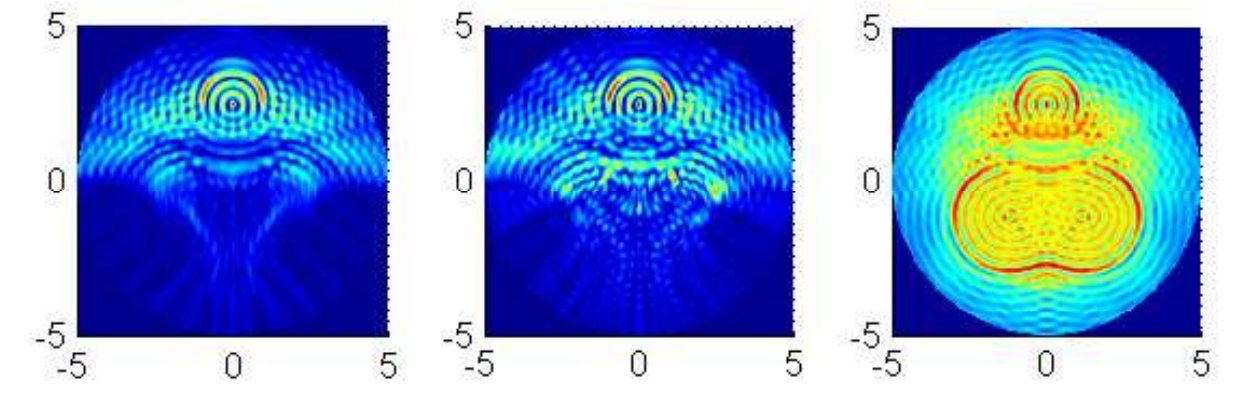}}
\caption{ {\bf Reconstructions using limited-aperture measurements.} The first two columns show the reconstructions with measurements on  $\pa B_+$.
The first column is the reconstructions using limited-aperture data directly, while the second column shows the reconstructions combined with the data completion algorithm. The third column shows the reconstructions with full aperture data.}
\label{limitaperture}
\end{figure}

\section*{Acknowledgments}
The research of X. Liu is supported by the NNSF of China grant 11971471 and the Youth Innovation Promotion Association, CAS. The research of B. Zhang is partially supported by the NNSF of China grant 91630309.



\begin{thebibliography}{10}


\bibitem{BaoLiu} G. Bao and J. Liu,
	Numerical solution of inverse problems with multi-experimental limited aperture data,
	{\em SIAM J.Sci.Comput. \bf25}, (2003), 1102-1117.
	
		  \bibitem{BORCEA2019556}
{L.~Borcea, F.~Cakoni, and S.~Meng},  {A direct approach to imaging
  in a waveguide with perturbed geometry}, {\em J. Comput. Phys.},
  {\bf 392} (2019), 556--577.
	
\bibitem{CaCo}
F.~Cakoni and D.~Colton.
\newblock {\em A qualitative approach to inverse scattering theory}, volume
  767.
\newblock Springer, 2014.


\bibitem{CaCoMe}
F.~Cakoni, D.~Colton, and S.~Meng.
\newblock The inverse scattering problem for a penetrable cavity with internal
  measurements.
\newblock {\em Inverse problems and applications}, 615:71--88, 2014.

\bibitem{CCH2013}
J.~Chen  Z.~Chen, and G.~Huang.
\newblock Reverse time migration for extended obstacles: acoustic waves.
\newblock {\em Inverse Problems}, 29(8):085005, 2013.



\bibitem{ColtonKirsch}
D. Colton and A. Kirsch, A simple method for solving inverse scattering problems in the resonance region.
{\em Inverse Problems \bf12} (1996), 383-393.

\bibitem{CK}
D.~Colton and R.~Kress.
\newblock {\em Inverse acoustic and electromagnetic scattering theory},
  volume~93.
\newblock Springer Nature, 2019.

\bibitem{ColtonMonk06} D. Colton and P. Monk,
	Target identification of coated objects
 {\em IEEE Trans. Antennas Propagat., \bf54}, (2006), 1232-1242.


\bibitem{DLMZ2021}
F.~Dou, X.~Liu, S.~Meng and B.~Zhang
\newblock Data completion algorithms and their applications in inverse acoustic scattering with limited-aperture backscattering data.
\newblock {\em arXiv:2106.11101}, 2021.

\bibitem{griesmaier2011multi}
R~Griesmaier.
\newblock Multi-frequency orthogonality sampling for inverse obstacle
  scattering problems.
\newblock {\em Inverse Problems}, 27(8):085005, 2011.

\bibitem{harris2020orthogonality}
I~Harris, D-L~Nguyen.
\newblock Orthogonality sampling method for the electromagnetic inverse scattering problem.
\newblock {\em SIAM Journal on Scientific Computing}, 42(3):722--737, 2020.

\bibitem{ItoJinZou}
K. Ito, B. Jin, and J. Zou, A direct sampling method to an inverse medium scattering problem,
{\em Inverse Problems, \bf28}, (2012), 025003.

\bibitem{Hu2014} G.~Hu,   J.~Yang, B.~Zhang, and H.~Zhang,
	Near-field imaging of scattering obstacles with the factorization method,
	{\em Inverse Problems }, 30(9):095005, 2014.

\bibitem{IkehataNiemiSiltanen} M. Ikehata, E. Niemi, and S. Siltanen,
	Inverse obstacle scattering with limited-aperture data,
	{\em Inverse Probl. Imaging \bf1}, (2012), 77-94.

\bibitem{Kirsch98} A. Kirsch,
	Characterization of the shape of a scattering obstacle using the spectral data of the far field operator,
	{\em Inverse Problems \bf14}, (1998), 1489-1512.

\bibitem{kirsch2008factorization}
A.~Kirsch and N.~Grinberg.
\newblock {\em The factorization method for inverse problems}.
\newblock Number~36. Oxford University Press, 2008.

\bibitem{LiLiuZou}
J. Li, H. Liu and J. Zou,
Locating multiple multiscale acoustic scatterers,
{\em SIAM Multiscale Model. Simul., \bf12}, (2014), 927--952.



\bibitem{L}
X.~Liu.
\newblock The factorization method for cavities.
\newblock {\em Inverse problems}, 30(1):015006, 2013.

\bibitem{LiuIP17} X. Liu,
	A novel sampling method for multiple multiscale targets from scattering amplitudes at a fixed frequency,
	{\em Inverse Problems \bf33}, (2017), 085011.

\bibitem{LiuSun19}X. Liu and J. Sun,
    Data recovery in inverse scattering problems: from limited-aperture to full-aperture,
    {\em J. Comput. Phys. \bf386(1)}, (2019), 350-364.



    \bibitem{LuXuXu2012AA} S. Lu, B. Xu, and X. Xu,
	Unique continuation on a line for the Helmholtz equation.
	{\em Appl. Anal. \bf91(9)}, (2012), 1761-1771.

\bibitem{MagerBleistein1978} R.D. Mager and N. Bleistein,
	An examination of the limited aperture problem of physical optics inverse scattering, {\em IEEE Trans.
	 Antennas Propag. \bf26}, (1978), 695-699.

\bibitem{McLean2000}
W.~McLean
\newblock {\em Strongly elliptic systems and boundary integral equations.}
\newblock Cambridge university press, 2000.	
	
\bibitem{MHC}
S.~Meng, H.~Haddar, and F.~Cakoni.
\newblock The factorization method for a cavity in an inhomogeneous medium.
\newblock {\em Inverse Problems}, 30(4):045008, 2014.

\bibitem{potthast2010study}
R.~Potthast.
\newblock A study on orthogonality sampling.
\newblock {\em Inverse Problems}, 26(7):074015, 2010.


\bibitem{QCa}
H.~Qin and F.~Cakoni.
\newblock Nonlinear integral equations for shape reconstruction in the inverse
  interior scattering problem.
\newblock {\em Inverse Problems}, 27(3):035005, 2011.

\bibitem{QCo2}
H.~Qin and D.~Colton.
\newblock The inverse scattering problem for cavities.
\newblock {\em Applied Numerical Mathematics}, 62(6):699--708, 2012.



\bibitem{Slepian78}
D. Slepian,
Prolate spheroidal wave functions, Fourier analysis, and uncertainty V: The discrete case,
{\em Bell System Tech. J. \bf57}, (1978), 1371-1430.

\bibitem{Varah1993}
J. M. Varah,
The prolate matrix,
{\em Linear Algebra Appl. \bf187}, (1993), 269-278.



\bibitem{ZM2021}
F.~Zeng and S.~Meng.
\newblock The interior inverse electromagnetic scattering for an inhomogeneous cavity.
\newblock {\em Inverse Problems}, 37(2):025007, 2021.


\bibitem{Zinn1989}
A. Zinn,
On an optimisation method for the full- and limited-aperture problem in inverse acoustic scattering for a sound-soft obstacle,
{\em Inverse Problems \bf5}, (1989), 239-253.

\end{thebibliography}
\end{document}